\newtheorem{thm}{Theorem}[section]
\newtheorem{cor}[thm]{Corollary}
\newtheorem{prop}[thm]{Proposition}
\theoremstyle{definition}
\newtheorem{defn}[thm]{Definition}
\newtheorem{remark}[thm]{Remark}
\newcommand{\bb}[1]{\mathbb{#1}}
\newcommand{\cl}[1]{\mathcal{#1}}
\begin{document}

\title[]{Correlation Minimizing Frames in Small Dimensions}

\author[G.~Getzelman]{Grant ~Getzelman}
\address{Department of Mathematics, University of Houston,
Houston, Texas 77204-3476, U.S.A.}
\email{}

\author[N.~L.~Leonhard]{Nicole L.~Leonhard}
\address{Department of Mathematics, University of Houston,
Houston, Texas 77204-3476, U.S.A.}
\email{leonhard@math.uh.edu}

\author[V.~I.~Paulsen]{Vern I.~Paulsen}
\address{Department of Mathematics, University of Houston,
Houston, Texas 77204-3476, U.S.A.}
\email{vern@math.uh.edu}

\thanks{}
\subjclass[2000]{Primary 46L15; Secondary 47L25}

\begin{abstract} A uniform tight frame of N vectors for a d dimensional space is {\it correlation minimizing} if among all such frames it is as ``nearly'' orthogonal as possible, i.e., it minimizes the maximal inner product of unequal vectors.  In this paper we  begin to catalog these frames for small dimensions, in particular, d=3.
\end{abstract}

\maketitle

\section{Introduction} 
In their study of the erasures problem, Holmes and the
third author
\cite{HP04} proved that a certain family of frames were optimal for 2
erasures and called these the {\it 2-optimal frames.} Briefly, these are
 uniform Parseval frames for which the minimal angle between any pair of vectors is as large as possible.  Thus, the 2-optimal frames are exactly the frames that are correlation minimizing among all frames in their family of frames. 

They proved that when equiangular
frames exist, then these are always 2-optimal and conversely. But very
few examples are known of 2-optimal frames in the cases when
equiangular frames do not exist. In particular, even in dimension 3,
very few 2-optimal frames are known and even less is known about ``uniqueness'' of such frames, i.e.,  when up to some natural notion of ``equivalence'' of frames a 2-optimal frame is unique.  We
provide additional examples of 2-optimal frames and in some cases we are able to prove uniqueness up to equivalence. 



  Optimal packings of $N$ lines in $R^3$ were studied by Conway, Hardin and
  Sloan  \cite{CHS96} for all values of $N \le 55.$  For some values of $N$
  they were able to give closed form descriptions of these sets of
  lines, along with proofs that they were indeed optimal packings,
  while for many values of $N$ they were only able to give numerical
  approximations to these optimal packings. Holmes and Paulsen
  \cite{HP04} also did numerical experiments that attempted to compute
  the minimum angle between vectors for 2-optimal frames of $N$
  vectors in $\bb R^3.$ Their computations showed that for some
  values of $N$ the minimum angle between vectors for 2-optimal frames
  appeared to be identical to the angle determined by \cite{CHS96} for
  optimal line packings. This lead
  them to conjecture that for these values on $N$, one could
  obtain uniform tight frames by choosing a unit vector from each line in
  the optimal line packing.

In this paper we find some of these 2-optimal frames in $\bb R^3$ for some values of $N$ and prove that for some of the values of $N$ where these
two angles were shown to numerically agree, that one does indeed
obtain tight frames, which are necessarily 2-optimal, by choosing unit
vectors from the optimal line packing. Also, in some cases earlier work only provided the
Grammian matrices of these 2-optimal frames, but for the convenience of
the reader who might be interested in experimenting with these frames,
we also produce the actual frames and some geometric descriptions of the
sets of vectors.



In the next section we review the necessary background information
needed and then in the final section we prove the results claimed
above and produce the frames.

\section{Background} 
We briefly review the key concepts from frame theory and the paper \cite{HP04} that we shall need.

\subsection{Frames}

A family $\cl F = \{f_i \}_{i\in I}$ of elements in a (real or complex) Hilbert space
$\cl H$ is called a \emph{frame} for $\b H$ if there are constants $0<A\le B < \infty,$  called the \emph{lower and upper frame bounds}, respectively
so that for all $f\in \cl H$
\begin{equation}
A\|f\|^2 \le \sum_{i\in I}|\langle f,f_i \rangle |^2 
\le B\|f\|^2.
\end{equation}

In general, a frame can have more vectors than the dimension of the Hilbert space and, in the case that the space is finite dimensional, we call
\[ \frac{card(I)}{dim(\cl H)} \]
the {\em redundancy} of the frame.
  
  If $A=B$, then $\cl F$ is called a \emph{tight frame} and when $A=B=1$, $\cl F$ is called a \emph{Parseval frame}. Thus, if $\cl F= \{ f_i \}_{i \in I}$ is a tight frame with constant $A$ then $\{ \frac{1}{\sqrt{A}} f_i \}_{i \in I}$ is a Parseval frame.

While many authors prefer to work with unit norm tight frames, we will mainly consider uniform Parseval frames. As we see above this is just a matter of scaling.

If $\cl F = \{f_i \}_{i\in I}$ is a frame for $\cl H$,
the \emph {analysis operator} is the bounded linear operator $V:\cl H \rightarrow {\ell}_{2}(I)$ given by $V(x)_i= \langle x,f_i \rangle$ for all $i\in I$. 
The adjoint of the analysis operator, $V^*: \ell_2(I) \to \cl H$ is defined by the formula, $V^*(e_i) = f_i,$ where $e_i$ is the vector that is $1$ in the $i$-th entry and 0 elsewhere. Hence, 
\[V^*Vx = \sum_{i \in I} \langle x, f_i \rangle f_i .\]

In particular, $\cl F$ is a Parseval frame if and only if $V$ is an isometry  and this is if and only if 
$V^*V = I_{\cl H}.$

Thus,  $\cl F= \{ f_i \}_{i \in I}$ is a Parseval frame if and only if we have that
\[ h = \sum_{i \in I} \langle h, f_i \rangle f_i, \, \forall h \in \cl H.\]
More generally,
if $\cl F= \{ f_i \}_{i \in I}$ is a tight frame for a Hilbert space $\cl H$ with constant $A$ then
\[ h = \frac{1}{\sqrt{A}} \sum_{i \in I} \langle h, f_i \rangle f_i, \, \forall h \in \cl H.\]
This is known as the {\it sampling and reconstruction formula.}

On the other hand the analysis operator $V$ is an isometry if and only if
the \emph{Grammian matrix}  $VV^*=\left( \langle f_j,f_i\rangle \right)$ is a projection with rank equal to $dim(\cl H),$ which gives another characterization of Parseval frames.

If $\cl F = \{ f_1,...,f_N\}$ is a Parseval frame  for a $d$-dimensional Hilbert space, then to shorten terminology we shall simply call $\cl F$ a {\it  $(N,d)$-frame.}  If $\cl F$ is a uniform $(N,d)$-frame with analysis operator $V$, then $V^*V= I_d$ the $d \times d$ identity matrix and
\[ d= rank(VV^*) = Tr(VV^*) =  \sum_{i=1}^N \|f_i\|^2 = N \|f_k\|^2,\]
for any $k.$ Thus, for a $(N,d)$-frame,
\[ \frac{1}{\|f_k\|^2} = \frac{N}{d},\]
is the redundancy.

For this reason, when $\cl F= \{ f_i \}_{i \in I}$ is a uniform Parseval frame for an infinite dimensional Hilbert space, we still call $\frac{1}{\|f_k\|^2}$ the {\it frame redundancy.}

Following the notation of \cite{HP04}, we let $\cl E_1(N,d)$ denote the set of all uniform $(N,d)$-frames  for $\bb R^d$(respectively, $\bb C^d$). This notation comes from a result of Casazza and Kovalecevic \cite{CK} that the uniform Parseval frames are in a certain sense optimal for the 1-erasure problem.

Given an indexed set $\cl S= \{ v_i \}_{i \in I}$ of non-zero vectors, their  \emph{ maximum correlation,} denoted  $M_\infty(\cl S)$, is defined as
\begin{equation}
M_\infty(\cl S) = \sup \{|\langle \frac{v_k}{\|v_k\|}, \frac{v_l}{\|v_l\|} \rangle|:  k,l \in I, \, k \ne l \}
\end{equation}
so that
\[ \Theta(\cl S) := arccos(M_\infty(\cl S))\]
is the infimum of the angles between pairs of vectors.
Thus, 
$0 \le M_\infty (\cl S) \le 1$  and $M_\infty(\cl S) = 0$ if and only
if $\Theta(\cl S) = \frac{\pi}{2}$ if and only if $\cl S$ is an orthogonal set. Thus,  smaller maximum correlation means a set is more nearly orthogonal. If $M_\infty(\cl S) =1$ and the supremum is attained, then any two vectors where the supremum is attained are parallel. So larger maximum correlation indicates that the set contains vectors that are more nearly parallel.

Given $\cl F= \{ f_1,...f_N \} \in \cl E_1(N,d)$ we have
\[ M_\infty(\cl F) = \frac{N}{d} \cdot \max \{ | \langle f_k, f_l \rangle| : k \ne l \}.\]
Note that the factor $\frac{N}{d}$ is included because $\cl F \in \cl
E_1(N,d)$ implies that each vector in $\cl F$ has norm $\sqrt{d/N}.$

We set
\begin{equation}
C(N,d) = \inf \{ M_\infty(\cl F): \cl F \in \cl E_1(N,d) \}.
\end{equation}
and
\[ \Theta(N,d) = arccos(C(N,d)).\]
Note that since $arccos$ is a decreasing function,
\[ \Theta(N,d) = \sup \{ \Theta(\cl F): \cl F \in \cl E_1(N,d) \}.\]
We shall call $C(N,d)$ the {\it correlation constant for frames in $\cl E_1(N,d)$}
and so $\Theta(N,d)$ is the {\it maximum angle between vectors for frames in $\cl E_1(N,d)$.}

These constants were introduced in \cite{HP04} where it was proven
that $C(N,d)$ was always attained, i.e., the infimum is actually a
minimum, and any uniform Parseval frame where it is attained was
called {\it 2-optimal}. This terminology arose from the result from \cite{HP04} that such frames were optimal for the 2-erasures problem.

Perhaps a more descriptive terminology would have been to call  a uniform Parseval frame where this minimum correlation is attained a {\it correlation minimizing} uniform Parseval frame or {\it angle maximizing} uniform Parseval frame. 
In which case the result from \cite{HP04} is that a uniform Parseval frame is correlation minimizing if and only if it is optimal for the 2-erasure problem.

In any case, since we will not be considering the erasures problem in
this paper, we prefer to use the term correlation minimizing.  Thus, a uniform Parseval frame is {\bf correlation minimizing} if and only if it is in the set 
\[ \cl E_2(N,d)= \{ \cl F \in E_1(N,d):  M_\infty(\cl F) = C(N,d) \}\]
and a uniform Parseval frame is correlation minimizing if and only if
it is 2-optimal. In \cite{BH14} the correlation minimizing frames are
called {\it Grassmannian Parseval frames}.

A frame is \emph{equiangular} if for some $\alpha, | \langle f_i , f_j \rangle | = \alpha$ for all i$ \neq$ j.  

\begin{thm}\cite{HP04}
Let $N \geq d$, and let $\cl F \in \cl E_1(N,d).$ Then
\begin{equation}
M_\infty(\cl F) \geq \sqrt{\frac{N - d}{d(N-1)}},
\end{equation}
and equality holds iff $\cl F$ is equiangular. \\
If there exists an equiangular frame $\cl F \in \cl E_1(N,d),$ then it is 2-optimal, i.e., correlation minimizing and in this case every frame in $\cl E_2(N,d)$ is equiangular.\\
Futhermore,  if $N > \frac{d(d+1)}{2}$ in the real case and $N > d^2$ in the complex case, then there is no equiangular frame in $\cl E_1(N,d)$ and equality cannot hold in the above equation.
\end{thm}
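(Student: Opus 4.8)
The plan is to prove the three assertions in sequence, since each builds on the previous. For the inequality, the key is to compute the trace and the Hilbert-Schmidt norm (trace of the square) of the Grammian $G = VV^*$. Since $\cl F \in \cl E_1(N,d)$ is a uniform Parseval frame, $G$ is an $N \times N$ rank-$d$ projection, so $\mathrm{Tr}(G) = \mathrm{Tr}(G^2) = d$. Writing out $\mathrm{Tr}(G^2) = \sum_{i,j} |\langle f_i, f_j\rangle|^2$ and separating diagonal from off-diagonal terms gives
\begin{equation*}
d = \sum_{i=1}^N \|f_i\|^4 + \sum_{i \ne j} |\langle f_i, f_j\rangle|^2 = \frac{d^2}{N} + \sum_{i \ne j} |\langle f_i, f_j\rangle|^2,
\end{equation*}
using $\|f_i\|^2 = d/N$. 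Hence $\sum_{i\ne j}|\langle f_i,f_j\rangle|^2 = d(N-d)/N$. There are $N(N-1)$ off-diagonal terms, so the maximum off-diagonal modulus is at least the root-mean-square, giving $\max_{i\ne j}|\langle f_i,f_j\rangle|^2 \ge \frac{d(N-d)}{N \cdot N(N-1)}$. Multiplying by the factor $(N/d)^2$ from the definition of $M_\infty(\cl F)$ yields $M_\infty(\cl F)^2 \ge \frac{N-d}{d(N-1)}$, which is the claimed bound. Equality in the averaging step holds exactly when all off-diagonal $|\langle f_i,f_j\rangle|$ are equal, i.e. when $\cl F$ is equiangular, establishing the ``iff''.

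For the second assertion, suppose an equiangular frame $\cl F_0 \in \cl E_1(N,d)$ exists. Then $M_\infty(\cl F_0)$ equals the lower bound, which by the inequality just proved is a universal lower bound for $M_\infty$ over all of $\cl E_1(N,d)$. Therefore $\cl F_0$ attains the infimum $C(N,d)$ and is correlation minimizing. Conversely, any $\cl F \in \cl E_2(N,d)$ satisfies $M_\infty(\cl F) = C(N,d) = M_\infty(\cl F_0)$, which equals the lower bound; but equality in the bound forces $\cl F$ to be equiangular by the first part. This shows every frame in $\cl E_2(N,d)$ is equiangular.

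For the final (nonexistence) assertion, the strategy is a dimension count on the space of symmetric (Hermitian) operators. If $\cl F$ is equiangular, then each rank-one operator $P_i = \frac{N}{d} f_i f_i^*$ is a projection, and the equiangularity condition translates into the statement that the Gram matrix of the $P_i$, viewed as vectors in the real inner product space of self-adjoint operators on $\bb R^d$ (resp. $\bb C^d$) under the Hilbert-Schmidt inner product, has a rigid form: $\langle P_i, P_j \rangle$ takes only two values. One shows the $P_i$ must be linearly independent, so $N$ cannot exceed the dimension of the ambient space of self-adjoint operators, which is $\frac{d(d+1)}{2}$ in the real case and $d^2$ in the complex case. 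I expect this linear-independence argument to be the main obstacle: it requires showing that the two-valued Gram matrix of the $P_i$ is nonsingular, which follows because a matrix of the form $aI + bJ$ (with $J$ the all-ones matrix) is invertible except for the degenerate eigenvalue, and the equiangular constraint keeps the relevant parameters in the invertible range. Once independence is established, the bound $N > \frac{d(d+1)}{2}$ (real) or $N > d^2$ (complex) immediately contradicts it, so no equiangular frame exists and, by the equality clause, the inequality must be strict.
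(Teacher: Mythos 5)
Your proposal is correct, and there is nothing in the paper to compare it against step by step: the paper states this theorem as background, citing \cite{HP04}, and gives no proof of its own. Your argument is essentially the standard one from that reference — the Welch bound via $\mathrm{Tr}(G^2)=\mathrm{Tr}(G)=d$ for the rank-$d$ projection Grammian plus the max-versus-mean averaging step (with equality exactly in the equiangular case), and Gerzon's absolute bound for the nonexistence clause, where the Hilbert--Schmidt Gram matrix $(1-c^2)I_N + c^2 J_N$ of the rank-one projections is positive definite because the equiangular constant satisfies $c<1$ when $d\geq 2$, forcing linear independence and hence $N \leq \frac{d(d+1)}{2}$ (real) or $N \leq d^2$ (complex). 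All three steps are sound as written.
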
 

Thus, we see that
\[ C(N,d) \ge \sqrt{\frac{N-d}{d(N-1)}},\]
with equality if and only if there exists an equiangular frame. The quantity appearing on the right hand side of the above equation is known as the {\it Welch bound.}

Since $C(N,d)$ is the actual lower bound for the maximum correlation of uniform Parseval frames and, consequently, for unit norm tight frames, it should be an important constant to compute. But surprisingly very little is known about it other than the numerical estimates in \cite{HP04}.


The goal of this paper is to find more examples of correlation minimizing uniform Parseval frames and to determine if they are  unique, modulo an equivalence relation, which we now explain.

\begin{defn} 
Frames $\cl F$ = $\{f_i\}_{i=1}^n$ and  $\cl G$ = $\{g_i\}_{i=1}^n$, are \emph{type I equivalent} if there exists a unitary (orthogonal matrix, in the real case) $U$ such that $g_i = Uf_i$ for all $i$. 
\end{defn} 

\begin{thm}\cite{HP04}\label{typeI}
If $V$ and $W$ are the analysis operators for $\cl F$ and $\cl G$, respectively, then the following are equivalent\\
1. $\cl F$ and $\cl G$ are \emph {type I equivalent} \\
2. there exists a unitary(respectively, orthogonal matrix) $U$ such that $V = WU$ \\
3. $VV^* = WW^*$.  
\end{thm}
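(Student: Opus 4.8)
My plan is to prove the cycle of implications $(1)\Rightarrow(2)\Rightarrow(3)\Rightarrow(1)$, relying only on the two defining properties of an analysis operator, $(Vx)_i=\langle x,f_i\rangle$ and $V^*e_i=f_i$ (and the analogues for $W$ and $\cl G$), together with the fact that a frame for $\cl H$ spans $\cl H$. For $(1)\Rightarrow(2)$, assuming $g_i=Uf_i$ with $U$ unitary, I would compute for every $x\in\cl H$ that $(WUx)_i=\langle Ux,g_i\rangle=\langle Ux,Uf_i\rangle=\langle x,f_i\rangle=(Vx)_i$, the middle step using that $U$ preserves inner products; hence $V=WU$. The step $(2)\Rightarrow(3)$ is then immediate, since $V=WU$ with $U$ unitary gives $VV^*=WUU^*W^*=WW^*$. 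As a remark, $(2)\Rightarrow(1)$ also drops out at once by passing to adjoints: $V=WU$ yields $f_i=V^*e_i=U^*W^*e_i=U^*g_i$, so $g_i=Uf_i$.

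The real content is $(3)\Rightarrow(1)$, and I expect this to be the only genuine obstacle. Reading off the $(i,j)$ entries, the hypothesis $VV^*=WW^*$ says precisely that the two Gramians coincide, $\langle f_j,f_i\rangle=\langle g_j,g_i\rangle$ for all $i,j$. I would then build the required unitary by the one natural recipe: declare $U\bigl(\sum_i c_i f_i\bigr)=\sum_i c_i g_i$ and extend by linearity. Since $\{f_i\}$ spans $\cl H$ this prescribes $U$ on all of $\cl H$, but because a frame is typically redundant there are many relations $\sum_i c_i f_i=0$, and the map is a priori ambiguous. This well-definedness is the crux, and it is resolved exactly by the Gramian equality: if $\sum_i c_i f_i=0$, then
\[ \Bigl\|\sum_i c_i g_i\Bigr\|^2=\sum_{i,j}c_i\overline{c_j}\,\langle g_i,g_j\rangle=\sum_{i,j}c_i\overline{c_j}\,\langle f_i,f_j\rangle=\Bigl\|\sum_i c_i f_i\Bigr\|^2=0, \]
so the value of $U$ does not depend on the chosen representation.

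The same computation, carried out for an arbitrary element rather than one annihilated by the frame, shows that $U$ preserves the inner product, hence is a linear isometry of $\cl H$ into itself; in finite dimensions an inner-product-preserving linear map is automatically unitary (orthogonal in the real case), and its surjectivity is clear since $\{g_i\}=\{Uf_i\}$ spans $\cl H$. As $Uf_i=g_i$ by construction, $\cl F$ and $\cl G$ are type I equivalent, which closes the cycle. I would emphasize that the crucial input throughout the hard direction is the equality of the full Gramians $VV^*=WW^*$---not merely equality of the ranges of $V$ and $W$, nor of the frame operators $V^*V$ and $W^*W$---since it is precisely this equality that simultaneously guarantees both that $U$ is well defined under the redundancy of the frame and that it is isometric.
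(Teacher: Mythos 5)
Your proof is correct, and all three implications are handled soundly: the cycle $(1)\Rightarrow(2)\Rightarrow(3)\Rightarrow(1)$ closes, the well-definedness issue in $(3)\Rightarrow(1)$ is exactly the crux, and your Gramian computation resolves it properly. Note, however, that the paper itself supplies no proof of this statement --- it is quoted as a background result from the Holmes--Paulsen paper \cite{HP04} --- so there is no in-paper argument to compare against. It is still worth contrasting your route with the standard one used in that setting: since the frames in question are Parseval, the analysis operators are isometries, $V^*V = W^*W = I_{\cl H}$, and one can simply define $U = W^*V$; then $U^*U = V^*WW^*V = V^*VV^*V = I$ (using hypothesis $(3)$ and the isometry property), $WU = WW^*V = VV^*V = V$, and applying $V=WU$ to the adjoint relation $V^*e_i = f_i$ recovers $g_i = Uf_i$. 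That argument is shorter but leans on Parsevalness; yours is more general, since the span-and-Gramian construction of $U$ works for arbitrary frames, where the analysis operators are merely injective rather than isometric. The one point to flag is that your final step (``inner-product-preserving plus surjective implies unitary, since the $g_i$ span'') silently uses finite-dimensionality or finiteness of the frame; for an infinite frame the $g_i$ only span a dense subspace and you would need to extend $U$ by continuity before concluding. In the paper's context ($(N,d)$-frames) this is harmless.
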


In \cite{HP04} it is shown that
there is a one-to-one correspondence between $N \times N$ rank $d$ projections and type I equivalence classes of  uniform $(N,d)$-frames.  

\begin{defn}Frames $\cl F= \{ f_i \}_{i=1}^n$ and $\cl G= \{ g_i \}_{i=1}^n$ are \emph{type II equivalent} if they are a
  permutation of the same set of vectors and they are \emph{type III
    equivalent} if there exist numbers $\{\lambda_i\}_{i=1}^n$ of
  modulus such that $f_i = \lambda_i g_i.$ Thus, in the real case if they differ by multiplication by $\pm 1$.  Two frames are \emph{equivalent} if they belong to the same equivalence class in the equivalence relation generated by these three equivalence relations. 
\end{defn}

\begin{thm}\cite{HP04}\label{equivalence}
 If $\cl F$ and $\cl G$ are $(N,d)$-frames with analysis operators $V$ and $W$, respectively, then  they are equivalent if and only if $UVV^*U^* = WW^*$ for some $N \times N$ unitary $U$ that is the product of a permutation matrix and a diagonal matrix with entries of modulus 1($\pm 1$, in the real case). 
\end{thm}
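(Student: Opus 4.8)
The plan is to prove the equivalence in Theorem~\ref{equivalence} by reducing to Theorem~\ref{typeI}, which already characterizes type~I equivalence via the Grammian. The strategy is to track how each of the three generating equivalence relations acts on the Grammian matrix $VV^*$ and show that the composite relation corresponds exactly to conjugation $VV^* \mapsto UVV^*U^*$ by a signed-permutation unitary $U$.

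First I would establish how each elementary operation transforms the analysis operator and hence the Grammian. For type~I equivalence, Theorem~\ref{typeI} already tells us $VV^* = WW^*$, so these operations leave the Grammian fixed. For type~II equivalence, if $\cl G$ is the permutation $g_i = f_{\sigma(i)}$, then $W = PV$ where $P$ is the permutation matrix associated to $\sigma$, so $WW^* = PVV^*P^*$. For type~III equivalence, if $g_i = \lambda_i f_i$ with $|\lambda_i|=1$, then $W = DV$ where $D = \mathrm{diag}(\lambda_1,\dots,\lambda_N)$, so $WW^* = DVV^*D^*$. Each of $P$ and $D$ is unitary, and products of permutation and diagonal unitaries are exactly the signed-permutation (monomial) unitaries. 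This gives the forward direction: if $\cl F$ and $\cl G$ are equivalent, then applying a sequence of these operations yields $WW^* = UVV^*U^*$ for some $U$ in the claimed class, since the product of the intermediate $P$'s, $D$'s, and type~I unitaries collapses into a single monomial $U$ (the type~I steps contribute nothing to the Grammian).

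For the converse, I would suppose $UVV^*U^* = WW^*$ with $U = PD$ a monomial unitary. The key move is to factor the problem: let $\cl H$ be the frame obtained from $\cl F$ by first applying the diagonal operation $D$ and then the permutation $P$, so that its analysis operator is $V' = PDV = UV$. Then $V'V'^* = UVV^*U^* = WW^*$, and $\cl H$ is by construction equivalent to $\cl F$ via type~II and type~III moves. Since $V'V'^* = WW^*$, Theorem~\ref{typeI} shows $\cl H$ and $\cl G$ are type~I equivalent. Composing, $\cl F$ and $\cl G$ lie in the same equivalence class generated by the three relations, which is exactly the definition of equivalence.

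I expect the main obstacle to be purely bookkeeping rather than conceptual: one must verify that the equivalence relation \emph{generated} by the three relations really is captured by a single conjugating monomial unitary, i.e., that an arbitrary finite alternating composition of type~I, II, and III moves produces a Grammian of the form $UVV^*U^*$ with $U$ monomial. This requires checking that the set of monomial unitaries is closed under multiplication (so several permutation and diagonal steps combine into one $U$) and that interleaved type~I steps never alter the Grammian and hence can be absorbed. Care is also needed with the degenerate phrasing ``numbers $\{\lambda_i\}$ of modulus'' in the type~III definition, which should read modulus~one; I would interpret it as $|\lambda_i|=1$ throughout.
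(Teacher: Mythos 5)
Your proof is correct. Note that the paper itself gives no proof of this statement: it is quoted as background from the cited Holmes--Paulsen paper, so there is no internal argument to compare yours against. Your reduction is the natural one and it works: each type~II or type~III move conjugates the Grammian by a permutation or diagonal unitary, type~I moves fix the Grammian by Theorem~\ref{typeI}, and the monomial (signed-permutation) unitaries form a group (since $P_1D_1P_2D_2 = (P_1P_2)\bigl((P_2^*D_1P_2)D_2\bigr)$ with $P_2^*D_1P_2$ again diagonal), so any finite alternating chain collapses to a single conjugation $WW^* = UVV^*U^*$. For the converse, your intermediate frame $\cl H$ with analysis operator $UV$ is indeed obtained from $\cl F$ by a type~III move followed by a type~II move, and Theorem~\ref{typeI} then gives type~I equivalence of $\cl H$ and $\cl G$; composing yields equivalence of $\cl F$ and $\cl G$. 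The only details worth writing out explicitly are the ones you flagged as bookkeeping: if $g_i = \lambda_i f_i$ then the analysis operator transforms as $W = DV$ with $D = \mathrm{diag}(\overline{\lambda_1},\dots,\overline{\lambda_N})$ (the conjugates appear, which is harmless since diagonal unitaries are closed under conjugation), and similarly the permutation matrix may implement $\sigma$ or $\sigma^{-1}$ depending on conventions; neither affects the argument.
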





\section{Grassmannian and Correlation Minimizing Frames in $\bb R^3$} 

The {\it  line packing problem} is the problem of packing $N$ lines in
$\bb R^d$ so that the minimal angle between any two of them is as
large as possible.  Any solution to this problem is called a {\it
  Grassmannian line packing.} Given a Grassmannian line packing with
$N \ge d$ if we choose one unit vector from each line, then this set
of vectors always yields a frame for $\bb R^d.$  Any frame obtained
this way is called a  {\it Grassmannian frame} by \cite{SH03}.

If a Grassmannian frame happens to be a tight frame, then after scaling the vectors by $\sqrt{d/N}$ we would obtain a uniform $(N,d)$-frame that is necessarily correlation minimizing.
Thus,  Grassmannian frames yield 2-optimal frames by scaling if and only if they are tight frames. 
 
A geometric approach to solving the line packing problem and list of best-known packings is posted on \cite{SL}. 
 Conway, Hardin, and Sloane \cite{CHS96} find the Grassmannina line packings of $N$ lines through the origin in $\bb{R}^3$, describe the packings and compute this minimal angle for $2 \leq N \leq 55$.  So a natural question that we shall study below is whether or not the Grassmannian frames arising from these Grassmannian line packings are tight. The numerical experiments of \cite{HP04} indicates that the answer should be ``yes'' for some values of $N$ and ``no'' for other values.


In \cite{HP04} the uniform $(N,2)$-frames that are correlation minimizing were constructed, it was shown that these frames form a single equivalence class, and that these are Grassmannian.

First note that whenever $N=d$ then any Parseval frame must be an orthonormal basis, since the frame operator $V$ will be an isometry from $\bb R^d$ to $\bb R^d$ and hence will be an orthogonal matrix.  Hence, the rows of $V$ will be an orthonormal set. Moreover, every orthonormal basis is type I equivalent. Thus, there is a unique equivalence class of $(d,d)$-frames and an orthonormal basis is clearly correlation minimizing and Grassmannian.

So the first interesting case is the $(4,3)$-frames.
For this a theorem is useful.

\begin{thm}\label{I-G}
Let $\cl F$ = $\{f_i\}_{i=1}^N$ be a correlation minimizing $(N,d)$-frame with Grammian matrix $G$.  Then $I_N -G$ is the Grammian matrix of a correlation minimizing $(N, N-d)$-frame.
Moreover, there is a one-to-one correspondence between equivalence classes of correlation minimizing $(N,d)$-frames and equivalence classes of correlation minimizing $(N, N-d)$-frames.
\end{thm}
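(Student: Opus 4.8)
The plan is to exploit the fact that the Grammian of a uniform Parseval frame is an orthogonal projection with constant diagonal, and that passing to the complementary projection corresponds exactly to passing from $(N,d)$ to $(N,N-d)$.

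First I would record the structural facts. Since $\cl F$ is a uniform $(N,d)$-frame, the Background section gives that its Grammian $G = VV^*$ is an $N \times N$ orthogonal projection of rank $d$ whose diagonal entries are $G_{ii} = \|f_i\|^2 = d/N$. The complementary matrix $I_N - G$ is then an orthogonal projection of rank $N-d$ with every diagonal entry equal to $1 - d/N = (N-d)/N$. Since this diagonal is constant and $I_N - G$ has rank $N-d$, the projection-to-frame correspondence shows $I_N - G$ is the Grammian of a uniform $(N,N-d)$-frame $\cl G = \{g_i\}_{i=1}^N$ with $\|g_i\|^2 = (N-d)/N$ for every $i$; indeed, writing $I_N - G = WW^*$ for an isometry $W$ recovers such a frame with $\langle g_j, g_i\rangle = (I_N - G)_{ij}$.

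Next I would compare maximum correlations. For $i \ne j$ the off-diagonal entries satisfy $(I_N - G)_{ij} = -G_{ij}$, so $\max_{i \ne j} |(I_N - G)_{ij}| = \max_{i \ne j}|G_{ij}|$. Substituting into $M_\infty(\cl F) = \tfrac{N}{d}\max_{i\ne j}|G_{ij}|$ and its analogue $M_\infty(\cl G) = \tfrac{N}{N-d}\max_{i\ne j}|(I_N-G)_{ij}|$ yields $M_\infty(\cl G) = \tfrac{d}{N-d}\, M_\infty(\cl F)$. Because $G \mapsto I_N - G$ is an involution carrying constant-diagonal rank-$d$ projections bijectively onto constant-diagonal rank-$(N-d)$ projections, and because it rescales $M_\infty$ by the fixed positive factor $d/(N-d)$, a frame attains the minimum over $\cl E_1(N,d)$ if and only if its complement attains the minimum over $\cl E_1(N,N-d)$. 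In particular $\cl G$ is correlation minimizing, and as a byproduct $C(N,N-d) = \tfrac{d}{N-d}\, C(N,d)$.

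Finally, for the bijection on equivalence classes I would invoke Theorem \ref{equivalence}: two frames with Grammians $G, G'$ are equivalent exactly when $U G U^* = G'$ for some $U$ that is a product of a permutation matrix and a diagonal matrix of $\pm 1$'s (modulus-one entries in the complex case). Such $U$ are unitary, so $U(I_N - G)U^* = I_N - U G U^* = I_N - G'$; hence $G \mapsto I_N - G$ sends equivalent frames to equivalent frames and descends to a well-defined map on equivalence classes. Being its own inverse, it is a bijection, and by the previous paragraph it restricts to a bijection between the correlation-minimizing equivalence classes in dimension $d$ and those in dimension $N-d$. The only step demanding care is this last one: one must check the \emph{same} conjugating unitary $U$ works for both $G$ and $I_N - G$, which is precisely the identity $U(I_N-G)U^* = I_N - UGU^*$ guaranteed by $UU^* = I_N$. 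Everything else is bookkeeping with the projection-frame dictionary.
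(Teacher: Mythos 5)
Your proposal is correct and follows essentially the same route as the paper: pass to the complementary projection $I_N - G$, use the projection--frame dictionary to realize it as the Grammian of a uniform $(N,N-d)$-frame, note the scaling $M_\infty(\cl F^{\perp}) = \tfrac{d}{N-d}M_\infty(\cl F)$, and invoke Theorem~\ref{equivalence} for the correspondence of equivalence classes. If anything, you are slightly more explicit than the paper on two points it leaves tacit --- that $G \mapsto I_N - G$ is an involution (hence surjective onto constant-diagonal rank-$(N-d)$ projections, which is what lets you conclude minimizers map to minimizers) and the identity $U(I_N - G)U^* = I_N - UGU^*$ --- but these are details of the same argument, not a different one.
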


\begin{proof} Let $\cl F$ = $\{f_i\}_{i=1}^N$ be any uniform $\left(N,d\right)$-frame with Grammian matrix $G$.  Then $G$ is a rank $d$ projection all of whose diagonal entries are equal to $\frac{d}{N}.$ Hence, $I_N - G$ is a rank $N-d$ projection all of whose diagonal entries are $\frac{N-d}{N}.$  Hence, if we let  $WW^*= I_N -G$ be any factorization, then by the results of the last section the rows of $W$(or their complex conjugates in the complex case) form a uniform $(N, N-d)$-frame whose Grammian is $I_N - G.$ This frame is not uniquely determined by $I_N -G,$ since many factorizations are possible, but it is unique up to type I equivalence by Theorem~\ref{typeI}. 

So choose one such uniform $(N, N-d)$-frame and denote it by $\cl F^{\perp}.$
Now if $\cl F_i$ $i=1,2$  are any uniform $(N,d)$-frames with Grammians $G_i$ $i=1,2$ and we let $\cl F_i^{\perp}$ $i=1,2$ be $(N,N-d)$-frames with Grammians $I_N - G_i$ obtained as above, then, by applying  Theorem~\ref{equivalence}, we see that $\cl F_1$ and $\cl F_2$ are equivalent if and only if $\cl F_1^{\perp}$ and $\cl F_2^{\perp}$ are equivalent.

Finally, since the maximum correlation of a frame is really just the maximum off-diagonal entry of its Grammian(appropriately scaled), we see that
\[ M_{\infty}(\cl F^{\perp}) = \frac{d}{N-d} M_{\infty}(\cl F),\]
and so whenever $\cl F$ is a correlation minimizing uniform $(N,d)$-frame that $\cl F^{\perp}$ is a correlation minimizing uniform $(N,N-d)$-frame.
\end{proof}

\begin{remark}  The above proof also shows that
\[ C(N, N-d) = \frac{d}{N-d}C(N,d).\]
\end{remark}
\medskip

Let $J_N$ denote the $N \times N$ matrix of all $1$'s.

\begin{cor} Up to equivalence there is a unique correlation minimizing $(N,1)$-frame and a unique correlation minimizing $(N, N-1)$-frame. These equivalence classes are represented by the uniform frames with Grammians $\frac{1}{N} J_N$ and $I_N - \frac{1}{N} J_N,$ respectively. Moreover, both these frames are equiangular and so these  frames are also Grassmannian.
\end{cor}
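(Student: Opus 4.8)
The plan is to establish the two cases ($d=1$ and $d=N-1$) and then note that they are linked by the duality of Theorem~\ref{I-G}, so that it suffices to treat the $(N,1)$ case directly. First I would observe that a $(N,1)$-frame is just a uniform Parseval frame for the one-dimensional space $\bb R^1$, so each $f_i$ is a scalar $f_i = \pm \frac{1}{\sqrt N}$ (using that $\|f_k\|^2 = d/N = 1/N$). The Grammian entries are then $\langle f_j, f_i \rangle = \pm \frac{1}{N}$, so every off-diagonal entry has modulus exactly $\frac{1}{N}$; this shows immediately that any $(N,1)$-frame is equiangular with $\alpha = \frac{1}{N}$, and hence by the cited Theorem of \cite{HP04} is automatically correlation minimizing. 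So existence and equiangularity are essentially free.

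Next I would address uniqueness up to equivalence. Given two $(N,1)$-frames with sign patterns $\{\epsilon_i\}$ and $\{\delta_i\}$, the type III equivalence (multiplication by $\pm 1$ in the real case) lets me flip any signs I like, so I can normalize both frames to have all entries equal to $+\frac{1}{\sqrt N}$. This produces the single representative frame whose Grammian is $\frac{1}{N} J_N$. Thus all $(N,1)$-frames collapse to one equivalence class, and its Grammian is $\frac{1}{N}J_N$ as claimed. I should double-check that $\frac{1}{N}J_N$ really is a rank-one projection with the correct diagonal: its diagonal entries are $\frac{1}{N}$, and $J_N^2 = N J_N$ gives $(\frac{1}{N}J_N)^2 = \frac{1}{N}J_N$, with rank one, consistent with $d=1$.

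For the $(N,N-1)$ statement I would invoke Theorem~\ref{I-G} directly: the map $G \mapsto I_N - G$ carries correlation minimizing $(N,1)$-frames to correlation minimizing $(N,N-1)$-frames and induces a bijection on equivalence classes. Since there is a unique class on the $(N,1)$ side represented by $\frac{1}{N}J_N$, there is a unique class on the $(N,N-1)$ side, represented by $I_N - \frac{1}{N}J_N$. Equiangularity of the dual frame then follows because passing from $G$ to $I_N - G$ negates the off-diagonal entries, preserving their common modulus; alternatively one reapplies the \cite{HP04} theorem to conclude that a correlation-minimizing frame whose bound is attained is equiangular. Finally, since both frames are equiangular, the \cite{HP04} theorem guarantees they achieve the Welch bound and are correlation minimizing, and a correlation-minimizing frame arising this way is exactly a Grassmannian frame (a tight frame built from an optimal line packing), giving the last assertion.

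The main obstacle, such as it is, lies in the uniqueness argument: I must be careful that the sign-normalization step is a legitimate equivalence move and that it genuinely identifies \emph{all} $(N,1)$-frames rather than only those with a fixed sign pattern. Concretely, I need the type III relation to be available (it is, per the \emph{equivalence} definition) and to confirm that after normalizing signs no further freedom produces a distinct Grammian — which holds because once every entry is $+\frac{1}{\sqrt N}$ the Grammian is forced to be $\frac{1}{N}J_N$. Everything else is bookkeeping: the equiangularity and the Grassmannian conclusion are immediate consequences of the modulus of the off-diagonal entries being constant together with the cited theorem.
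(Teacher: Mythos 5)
Your proposal is correct and follows essentially the same route as the paper: normalize an arbitrary $(N,1)$-frame by sign (type III) equivalence to the single representative with Grammian $\frac{1}{N}J_N$, transfer uniqueness to the $(N,N-1)$ case via the duality of Theorem~\ref{I-G}, and read off equiangularity from the constant modulus $\frac{1}{N}$ of the off-diagonal Grammian entries. The extra details you supply (verifying the projection property, invoking the Welch-bound theorem of \cite{HP04}) are harmless elaborations of the same argument.
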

\begin{proof} To obtain a uniform $(N,1)$-frame one must choose $N$ numbers of modulus $1/\sqrt{N}.$ But these are all  equivalent to choosing the number
$1/\sqrt{N}$ $N$-times.  Thus, up to equivalence there is only one uniform $(N,1)$-frame and it has Grammian $\frac{1}{N} J_N.$

Hence, by the above theorem, up to equivalence there is only one $(N, N-1)$-frame and it has Grammian given by $I_N - \frac{1}{N} J_N.$

All these frames are equiangular since all the off-diagonal entries in their Grammians are of constant modulus $\frac{1}{N}.$
\end{proof}

We can now give a concrete description of one representative of this equivalence class of frames in the case $N=4,$ $d=N-1=3.$

\begin{thm} \label{(4,3)}The lines generated by opposite vertices of the inscribed cube in the sphere centered at the origin is the optimal packing of 4 lines in 3-space. If we take the sphere of radius $\frac{\sqrt{3}}{2}$ centered at the origin and consider the 8 vectors determined by the vertices of this cube, then any set of 4 of these vectors that are not collinear yields a  correlation minimizing, equiangular $\left(4,3\right)$-frame.
In particular, one correlation minimizing, equiangular $(4,3)$-frame is given by
\[ (+\frac{1}{2}, +\frac{1}{2}, +\frac{1}{2}), 
(-\frac{1}{2}, -\frac{1}{2}, +\frac{1}{2}), (-\frac{1}{2},
+\frac{1}{2}, -\frac{1}{2}), (+\frac{1}{2}, -\frac{1}{2},
-\frac{1}{2}) \]
and every other correlation minimizing (4,3)-frame is equivalent to
this frame.
\end{thm}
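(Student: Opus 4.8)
The plan is to reduce the whole statement to a single Grammian computation together with the uniqueness already supplied by the Corollary. I would begin by recording the only piece of scaling data that matters: a uniform $(4,3)$-frame has vectors of norm $\sqrt{d/N}=\frac{\sqrt{3}}{2}$, which is exactly why the relevant sphere has radius $\frac{\sqrt{3}}{2}$ and why the cube with vertices $(\pm\frac12,\pm\frac12,\pm\frac12)$ is the natural candidate, its eight vertices lying on that sphere. The four displayed vectors are one vertex from each of the four antipodal pairs, i.e.\ the endpoints of the four long diagonals of the cube, forming a regular tetrahedron.

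The computational heart is to form the Grammian $G$ of the four displayed vectors and verify that each of the six off-diagonal inner products equals $-\tfrac14$, so that $G=I_4-\tfrac14 J_4$. Since $J_4$ has eigenvalue $4$ once and $0$ with multiplicity three, $G$ has eigenvalue $0$ once and $1$ with multiplicity three; hence $G$ is a rank-$3$ projection with constant diagonal $\tfrac34=d/N$, which by the background section is precisely the Grammian of a uniform $(4,3)$-frame. As all off-diagonal entries have modulus $\tfrac14$, the frame is equiangular, and $M_\infty=\frac{N}{d}\cdot\tfrac14=\tfrac13=\sqrt{\tfrac{N-d}{d(N-1)}}$ meets the Welch bound; by the first theorem quoted from \cite{HP04} the frame is therefore correlation minimizing. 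This already establishes the ``in particular'' clause.

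For the claim that \emph{any} four non-collinear vertices work, note that non-collinearity forces exactly one vertex to be taken from each of the four lines, so such a choice differs from the displayed frame only by replacing each $f_i$ with $\pm f_i$ and by a permutation. This flips signs of off-diagonal Grammian entries but preserves their common modulus $\tfrac14$, hence preserves both equiangularity and the value of $M_\infty$, so every such frame is again correlation minimizing. Since sign changes and permutations are exactly the type III and type II equivalences, Theorem~\ref{equivalence} shows all of these frames are equivalent to the displayed one. The final uniqueness clause is then immediate from the Corollary: with $N=4$ and $d=3=N-1$ there is, up to equivalence, a unique correlation minimizing $(4,3)$-frame, and its Grammian is $I_4-\tfrac14 J_4=G$.

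It remains to justify the opening assertion that the cube-diagonal lines give the optimal packing of four lines in $\mathbb{R}^3$, and here I would either cite \cite{CHS96} or give the short self-contained Welch-type argument: for arbitrary unit vectors $u_1,\dots,u_4$ put $S=\sum_i u_iu_i^*$, so $\operatorname{Tr}(S^2)\ge(\operatorname{Tr}S)^2/3=16/3$ while $\operatorname{Tr}(S^2)=4+\sum_{i\ne j}|\langle u_i,u_j\rangle|^2\le 4+12\max_{i\ne j}|\langle u_i,u_j\rangle|^2$, forcing $\max_{i\ne j}|\langle u_i,u_j\rangle|\ge\tfrac13$; since the tetrahedron attains $\tfrac13$, the maximal minimal angle is $\arccos(\tfrac13)$ and the cube diagonals realize it. The work here is bookkeeping rather than depth: the only points needing care are that ``four non-collinear vertices'' really does mean one vertex per line, so the equivalence argument applies, and that the line-packing optimality uses the general (non-tight) Welch inequality above rather than the frame version, whose hypotheses do not directly cover arbitrary lines.
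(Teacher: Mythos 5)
Your proposal is correct and takes essentially the same approach as the paper: both hinge on computing the Grammian of the four displayed vectors, recognizing it as $I_4 - \tfrac{1}{4}J_4$, and invoking the preceding Corollary on $(N,N-1)$-frames for correlation minimality and uniqueness up to equivalence. Your additional material (the Welch-bound verification, the sign-flip/permutation argument for arbitrary non-collinear vertex choices, and the self-contained line-packing bound) simply fleshes out what the paper dismisses as ``straightforward to verify.''
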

\begin{proof} Let
\[V= \left( \begin{matrix} {f_1}^* \\ {f_2}^* \\ {f_3}^* \\ {f_4}^* \end{matrix} \right) = \left( \begin{matrix}
+\frac{1}{2}& +\frac{1}{2}& +\frac{1}{2} \\
-\frac{1}{2}& -\frac{1}{2}& +\frac{1}{2} \\
-\frac{1}{2}& +\frac{1}{2}& -\frac{1}{2} \\
+\frac{1}{2}& -\frac{1}{2}& -\frac{1}{2} \end{matrix}\right).\]
Computing the Grammian yields $G=VV^*= I_4 - \frac{1}{4}J_4$ so that this is one representative of the unique correlation minimizing $(4,3)$-frame.

The remaining claims are now straightforward to verify.
\end{proof}

\begin{thm} \label{(5,3)} A correlation minimizing $(5,3)$-frame is given by the vectors:
\begin{multline*}
c (1, 0,  0), \, \, c ( \frac{-1-\sqrt{5}}{6},  \frac{15 - \sqrt{5}}{a},  0) \\
 c (\frac{1-\sqrt{5}}{6},   \frac{-5-3\sqrt{5}}{a},  \frac{150-30\sqrt{5}}{ab}),  
c (\frac{-1+\sqrt{5}}{6},   \frac{5-3\sqrt{5}}{a},   \frac{-60\sqrt{5}}{ab}) \\
c (\frac{1 + \sqrt{5}}{6},  \frac{4\sqrt{5}}{a},  \frac{150-30\sqrt{5}}{ab}),
\end{multline*}
where  $a = \sqrt{18\left(15-\sqrt{5}\right)}$, 
  $b = \sqrt{150-30\sqrt{5}}$, and $c = \sqrt{3/5}.$ Every other correlation minimizing $(5,3)$-frame is equivalent to this frame.
\end{thm}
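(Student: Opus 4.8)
The plan is to reduce the entire statement to the already-understood two-dimensional case by means of the complement construction of Theorem~\ref{I-G}. A correlation minimizing $(5,3)$-frame has as its complement a correlation minimizing $(5,2)$-frame, and by \cite{HP04} the correlation minimizing $(5,2)$-frames form a single equivalence class (the frame arising from five equally spaced lines in the plane). Since Theorem~\ref{I-G} sets up a one-to-one correspondence between equivalence classes of correlation minimizing $(5,3)$-frames and equivalence classes of correlation minimizing $(5,2)$-frames, there is exactly one equivalence class of correlation minimizing $(5,3)$-frames. Hence the uniqueness assertion that ``every other correlation minimizing $(5,3)$-frame is equivalent to this frame'' follows immediately, once we show that the explicitly displayed frame is in fact a correlation minimizing $(5,3)$-frame.

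So the real work is to verify that the displayed vectors $\cl F = \{f_1,\dots,f_5\}$ constitute a correlation minimizing $(5,3)$-frame. First I would confirm that $\cl F$ is a uniform $(5,3)$-frame. Writing $V$ for the $5\times 3$ matrix whose rows are $f_1^*,\dots,f_5^*$, it suffices to check that the three columns of $V$ are orthonormal, i.e.\ $V^*V = I_3$; this makes $G := VV^*$ an orthogonal projection of rank $3$, and then $\cl F$ is Parseval by the Grammian characterization in the Background section. Uniformity follows by checking that each row has squared norm $\|f_i\|^2 = 3/5$, which reduces to observing that each displayed coordinate triple is a unit vector together with the factor $c=\sqrt{3/5}$; for instance the second triple satisfies $\tfrac{(1+\sqrt5)^2}{36}+\tfrac{(15-\sqrt5)^2}{a^2}=\tfrac{3+\sqrt5}{18}+\tfrac{15-\sqrt5}{18}=1$. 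The constants $a = \sqrt{18(15-\sqrt5)}$ and $b = \sqrt{150-30\sqrt5}$ are precisely the normalizations forcing these unit-norm and column-orthogonality relations, so this step is a finite (if unpleasant) verification.

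It then remains to show $M_\infty(\cl F) = C(5,3)$. By the Remark following Theorem~\ref{I-G} we have $C(5,3) = \tfrac{d}{N-d}\,C(5,2) = \tfrac{2}{3}\,C(5,2)$, and by \cite{HP04} (equivalently, from the geometry of five equally spaced lines, whose minimal angle is $\pi/5$) the two-dimensional value is $C(5,2) = \cos(\pi/5) = \tfrac{1+\sqrt5}{4}$, so that $C(5,3) = \tfrac{1+\sqrt5}{6}$. On the other hand, since $M_\infty(\cl F) = \tfrac{N}{d}\max_{k\ne l}|\langle f_k,f_l\rangle| = \tfrac{5}{3}\max_{k\ne l}|g_{kl}|$, I would compute the off-diagonal entries of $G$ and check that the largest in modulus equals $\tfrac{1+\sqrt5}{10}$ (for example $\langle f_1,f_2\rangle = c^2\cdot\tfrac{-1-\sqrt5}{6} = \tfrac{-1-\sqrt5}{10}$), which gives $M_\infty(\cl F) = \tfrac{5}{3}\cdot\tfrac{1+\sqrt5}{10} = \tfrac{1+\sqrt5}{6} = C(5,3)$. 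Equivalently, one may verify directly that $I_5 - G$ is the Grammian of the five-equally-spaced-lines $(5,2)$-frame and then invoke Theorem~\ref{I-G} to conclude correlation minimality.

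The main obstacle is entirely computational rather than conceptual: confirming $V^*V = I_3$ and locating the maximal off-diagonal entry of $G$ require manipulating the nested radicals in $a$ and $b$, and one must check that \emph{every} off-diagonal entry of $G$ is bounded in modulus by $\tfrac{1+\sqrt5}{10}$, not merely the single pair exhibited above. Once these finite identities are in hand, the structural input---Theorem~\ref{I-G} together with the $(5,2)$ results of \cite{HP04}---supplies both correlation minimality and uniqueness up to equivalence with no further argument.
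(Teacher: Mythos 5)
Your proposal is correct and follows essentially the same route as the paper: both arguments rest on Theorem~\ref{I-G} together with the uniqueness of the correlation minimizing $(5,2)$-frame from \cite{HP04}, which simultaneously yields existence, the value $C(5,3)=\tfrac{2}{3}\cos(\pi/5)=\tfrac{1+\sqrt{5}}{6}$, and uniqueness up to equivalence. The only (inessential) difference is in how the displayed vectors are certified: the paper factors the complementary Grammian $I_5-G_{(5,2)}$ as $\tfrac{3}{5}UU^*$ so that correctness holds by construction, whereas your primary route checks Parsevalness, uniformity, and the maximal off-diagonal entry directly---an equivalent computation, and you note the paper's factorization argument as an alternative anyway.
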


\begin{proof}
From \cite{HP04} we have that the correlation minimizing $\left( 5,2\right)$-frame is unique up to equivalence and one representative is given by the vectors 
\[\{ \big( cos\left(\frac{\pi k}{5}\right), sin\left(\frac{\pi k}{5}\right) \big)  : k=1,2,3,4,5 \}.\] 
Thus, by Theorem~\ref{I-G} the correlation minimizing $(5,3)$-frame will be unique up to equivalence and a representative Grammian will be given by $G= I_5 - G_{(5,2)},$ where
$G_{(5,2)}$ is the Grammian of the above vectors. 

Computing this Grammian yields,
\[G =\left( \begin{smallmatrix} 3/5& \frac{2}{5}cos(\pi/5)& \frac{2}{5}cos(2\pi/5) & \frac{2}{5}cos(3\pi/5) & \frac{2}{5}cos(4\pi/5) \\ \frac{2}{5}cos(\pi/5) & 3/5 & \frac{2}{5}cos(\pi/5) & \frac{2}{5}cos(2\pi/5) & frac{2}{5}cos(3\pi/5)\\ \frac{2}{5}cos(2\pi/5) & \frac{2}{5}cos(\pi/5) & 3/5 & \frac{2}{5}cos(\pi/5) & \frac{2}{5}cos(2\pi/5)\\ \frac{2}{5}cos(3\pi/5) & \frac{2}{5}cos(2\pi/5) & \frac{2}{5}cos(\pi/5)& 3/5& \frac{2}{5}cos(\pi/5)\\ \frac{2}{5}cos(4\pi/5) & \frac{2}{5}cos(3\pi/5) & \frac{2}{5}cos(2\pi/5) & \frac{2}{5}cos(\pi/5) & 3/5  \end{smallmatrix} \right) .\]
  This can be factored as $G=\frac{3}{5}UU^*$ where 
\[U= \begin{pmatrix}
  1 & 0 & 0 \\
   \frac{-1-\sqrt{5}}{6} & \frac{15 - \sqrt{5}}{a} & 0 \\
   \frac{1-\sqrt{5}}{6} &  \frac{-5-3\sqrt{5}}{a} &  \frac{150-30\sqrt{5}}{ab} \\
   \frac{-1+\sqrt{5}}{6} &  \frac{5-3\sqrt{5}}{a} &  \frac{-60\sqrt{5}}{ab} \\
  \frac{1 + \sqrt{5}}{6} & \frac{4\sqrt{5}}{a} &  \frac{150-30\sqrt{5}}{ab} \\
  \end{pmatrix},\]
with $a$ and $b$ as above. 
  \end{proof}
 \begin{cor} The Grassmannian frame of 5 vectors in $\bb R^3$ is not a tight frame.
\end{cor}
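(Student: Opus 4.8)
The plan is to compare two angles: the minimal angle $\Theta(5,3)=\arccos(C(5,3))$ realized by the correlation minimizing frame just constructed, and the minimal angle of the optimal packing of $5$ lines in $\bb R^3$ recorded in \cite{CHS96}. Recall that $\Theta(5,3)$ is, by the definition of $\Theta(N,d)$ in Section~2, the \emph{supremum} over all $\cl F\in\cl E_1(5,3)$ of the minimal angle $\Theta(\cl F)$; hence it is the largest minimal angle attainable by any uniform $(5,3)$-frame. It therefore suffices to show that the optimal line packing has a strictly larger minimal angle, for then no tight frame can realize it.

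First I would read off $C(5,3)$ from the Grammian $G$ appearing in the proof of Theorem~\ref{(5,3)}. Its off-diagonal entries are the numbers $\frac{2}{5}\cos(k\pi/5)$ for $k=1,2,3,4$, whose largest modulus is $\frac{2}{5}\cos(\pi/5)$. Applying the scaling formula $M_\infty(\cl F)=\frac{N}{d}\max_{k\ne l}|\langle f_k,f_l\rangle|$ gives $C(5,3)=\frac{5}{3}\cdot\frac{2}{5}\cos(\pi/5)=\frac{2}{3}\cos(\pi/5)=\frac{1+\sqrt5}{6}$, so that $\Theta(5,3)=\arccos\!\left(\frac{1+\sqrt5}{6}\right)\approx 57.4^\circ$.

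Next I would invoke \cite{CHS96}, where the optimal packing of $5$ lines in $\bb R^3$ is shown to have minimal angle $\arccos(1/\sqrt5)\approx 63.4^\circ$, i.e. maximal correlation $1/\sqrt5<\frac{1+\sqrt5}{6}=C(5,3)$. Since rescaling each vector by a common positive scalar does not change any angle, if the Grassmannian frame were tight then rescaling its vectors to norm $\sqrt{3/5}$ would produce a member of $\cl E_1(5,3)$ whose minimal angle equals that of the packing, namely $\arccos(1/\sqrt5)$. But this strictly exceeds the maximum $\Theta(5,3)=\arccos(C(5,3))$, a contradiction. (The statement is well posed because tightness depends only on the five lines: each $v_iv_i^*$ is the rank-one projection onto the $i$-th line and is unaffected by the choice of unit vector on that line.)

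The only delicate point is extracting the precise value $\arccos(1/\sqrt5)$ of the optimal packing angle from \cite{CHS96} and verifying the strict inequality $1/\sqrt5<\frac{1+\sqrt5}{6}$ (numerically $0.447<0.539$); everything else is the elementary comparison above. The hard part is thus not the argument but making sure the two quoted constants are exactly those of the correlation minimizing frame of Theorem~\ref{(5,3)} and of the Conway--Hardin--Sloane packing, respectively.
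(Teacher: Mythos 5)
Your proof is correct and takes essentially the same approach as the paper: both compute $C(5,3)=\tfrac{2}{3}\cos(\pi/5)$ from the Grammian in Theorem~\ref{(5,3)} and compare it with the Conway--Hardin--Sloane optimal packing angle for $5$ lines, concluding that a tight Grassmannian frame would contradict the extremality of $C(5,3)$. The only difference is that you supply the explicit packing value $\arccos(1/\sqrt{5})$ and verify the strict inequality, whereas the paper merely notes the two angles are ``not equal'' and relies implicitly on the fact that the optimal packing's correlation can only be smaller.
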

\begin{proof}
By inspection the largest off diagonal entry of the above $G$  is $\frac{2}{5}cos(\pi/5),$ and the smallest angle produced by the vectors of this Grammian is equal to $\arccos(\frac{2}{3} cos(\pi/5))$ which is approximately $57.361$ degrees. This  is not equal to the angle of the optimal packing of 5 lines  found in \cite{CHS96}.
Thus, if we take one unit vector from each of the 5 lines corresponding to the optimal packing of 5 lines through the origin in $\bb R^3,$ then this set of vectors can not be a tight frame since its correlation is smaller.
\end{proof}
 
Thus, the correlation minimizing $(5,3)$-frame is an example that is not obtained via the optimal line packing.  In the language of \cite{SH03} the correlation minimizing $(5,3)$-frame is not a Grassmannian frame. In the language of \cite{BH14} the Grassmannian Parseval frame of 5 vectors in $\bb R^3$ is not a Grassmannian frame.

In \cite{BK06} it was shown that the correlation minimizing $(6,3)$-frame is equiangular, that it is unique up to equivalence and its Grammian was given. Below we give a geometric description of the set of vectors for one representative of this equivalence class and give the vectors explicitly.

\begin{thm} \label{(6,3)} The $6$ vertices, that lie in the upper half plane of an icosahedron centered at the origin and symmetric about the $xy$-plane, form a correlation minimizing $(6,3)$-frame.  
Set $\alpha = \frac{1}{\sqrt{5}}$, then these are the vectors given by: \\
$f_1 = \frac{1}{\sqrt{2}} \left(0,0,1\right)$, \\
$f_2 = \frac{1}{\sqrt{2}}\left(\sqrt{1 - \alpha^2},0,\alpha \right)$,\\
$f_3 = \frac{1}{\sqrt{2}}\left(\alpha \sqrt{\frac{1 - \alpha}{1 + \alpha}}, \sqrt{\frac{(1 + 2\alpha)(1 - \alpha)}{1 + \alpha}}, \alpha \right)$, \\
$f_4 = \frac{1}{\sqrt{2}}\left( \alpha \sqrt{\frac{1 - \alpha}{1 + \alpha}}, -\sqrt{\frac{(1 + 2\alpha)(1 - \alpha)}{1 + \alpha}}, \alpha \right)$, \\
$f_5 = \frac{1}{\sqrt{2}}\left(-\alpha \sqrt{\frac{1 - \alpha}{1 + \alpha}}, \sqrt{\frac{(1 - 2\alpha)(1 + \alpha)}{1 - \alpha}}, \alpha \right)$, \\
$f_6 =\frac{1}{\sqrt{2}}\left( -\alpha \sqrt{\frac{1 + \alpha}{1 - \alpha}}, -\sqrt{\frac{(1 - 2\alpha)(1 + \alpha)}{1 - \alpha}}, \alpha\right)$. \\
Every other correlation minimizing $(6,3)$-frame is equivalent to this frame.
\end{thm}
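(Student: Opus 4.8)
The plan is to reduce the first assertion to the equiangular-frame criterion of \cite{HP04} recalled above, and to obtain the uniqueness assertion from \cite{BK06}. Concretely, let $V$ be the $6\times 3$ matrix whose rows are $f_1^*,\dots,f_6^*$, so that $V$ is the analysis operator of $\cl F=\{f_i\}_{i=1}^6$. I would show that $\cl F$ is a \emph{uniform Parseval} frame for $\bb R^3$ that is \emph{equiangular}. By the theorem of \cite{HP04} stated above, once $\cl F$ is an equiangular member of $\cl E_1(6,3)$ it is automatically correlation minimizing, which gives the first claim. For the second claim, \cite{BK06} already proved that the correlation minimizing $(6,3)$-frame is unique up to equivalence, so it suffices to remark that our $VV^*$ agrees, up to the permutation-diagonal conjugation of Theorem~\ref{equivalence}, with the Grammian recorded there; equivalently, any equiangular $(6,3)$-frame is equivalent to ours.

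First I would verify uniformity, namely $\|f_i\|^2=d/N=\tfrac12$ for every $i$, which is the purpose of the overall factor $\tfrac{1}{\sqrt2}$. For $f_2$ this is the identity $(1-\alpha^2)+\alpha^2=1$, and for the pentagon vectors $f_3,\dots,f_6$ it follows after clearing the denominators $1\pm\alpha$ and using $\alpha^2=\tfrac15$; for instance the bracket $\alpha^2+1+2\alpha=(1+\alpha)^2$ collapses the $f_3$ norm to $(1-\alpha)(1+\alpha)+\alpha^2=1$. I would then check the Parseval condition $V^*V=I_3$, that is, that the three columns of $V$ are orthonormal in $\bb R^6$. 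Since the last coordinate of $f_2,\dots,f_6$ is the constant $\alpha$, two of these relations reduce to the vanishing of the sums of the first and of the second coordinates of the pentagon vectors, and the remaining relations are short computations in $\bb Q(\sqrt5)$.

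The heart of the argument is equiangularity. Here I would compute the off-diagonal Grammian entries $\langle f_j,f_i\rangle$ and show each has the common modulus forced by the Welch bound, which for $(N,d)=(6,3)$ is $|\langle f_i,f_j\rangle|=\tfrac{\alpha}{2}=\tfrac{1}{2\sqrt5}$; equivalently every normalized inner product equals $1/\sqrt5$, so $\Theta(6,3)=\arccos(1/\sqrt5)$, the classical icosahedral line-packing angle. The pole--pentagon products $\langle f_1,f_k\rangle$ are immediate, equalling $\tfrac12\alpha$, and the pentagon--pentagon products reduce, after substituting $\alpha^2=\tfrac15$, to the constant $\pm\tfrac{\alpha}{2}$ via the same $(1\pm\alpha)$ cancellations used for the norms.

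I expect the main obstacle to be purely the bookkeeping of the nested radicals $\sqrt{\tfrac{(1\pm2\alpha)(1\mp\alpha)}{1\pm\alpha}}$ in the equiangularity step: each pentagon--pentagon inner product mixes two such surds, and one must verify that these apparently distinct expressions collapse to the same rational multiple of $\alpha$. Because $\alpha^2=\tfrac15$ makes $\bb Q(\alpha)$ a degree-two extension, every such identity is equivalent to a pair of rational identities and is therefore mechanical, but care is needed with the signs and the precise radicands; verifying $\|f_i\|^2=\tfrac12$ in fact pins down each coordinate and so serves as a useful consistency check while carrying out the computation. Once uniformity, the Parseval identity, and equiangularity are in hand, the theorem of \cite{HP04} and the uniqueness result of \cite{BK06} together finish both assertions.
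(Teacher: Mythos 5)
Your proposal is correct and is essentially the paper's own argument: the paper likewise takes the vectors from \cite{BK06}, checks that they are uniform of norm $\sqrt{3}/\sqrt{6}$ and equiangular with normalized correlation $1/\sqrt{5}$ (the Welch bound for $(6,3)$), concludes correlation minimization from the equiangularity theorem of \cite{HP04}, and obtains uniqueness by citing \cite{BK06}. Your one addition, the explicit verification of the Parseval condition $V^*V=I_3$, is a point the paper leaves implicit in its citation of \cite{BK06}, and carrying out your norm check literally would in fact expose a typographical error in the stated vectors: the first coordinate of $f_5$ must be $-\alpha\sqrt{(1+\alpha)/(1-\alpha)}$, matching the radicand in $f_6$, for $\|f_5\|^2=\tfrac12$ and the Parseval identity to hold.
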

\begin{proof}
From \cite{BK06} we have the vectors defined above.  For $k \neq l$, we compute $|\langle f_k, f_l \rangle| = \frac{1}{\sqrt{5}}$. Thus, this set of vectors is equiangular and and each vector has norm $\frac{\sqrt{3}}{\sqrt{6}}$ so these must be a correlation minimizing $(6,3)$-frame.

In \cite{CHS96} it is observed that the $6$ lines obtained by taking  antipodal pairs of points on an icosahedron are equiangular.
\end{proof}

Before constructing a correlation minimizing $(7,3)$-frame a little proposition will be useful.

\begin{prop} If $\{ f_1, \ldots, f_N \}$ is a uniform $(N,d)$-frame and $\{g_1, \ldots, g_M \}$ is a uniform $(M,d)$-frame, then $\{ af_1, \ldots, af_N, bg_1, \ldots, bg_M \}$ is a uniform $(M+N,d)$-frame, where $a= \sqrt{N/(N+M)}$ and $b= \sqrt{M/(N+M)} $.
\end{prop}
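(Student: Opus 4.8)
The plan is to verify directly the two defining properties of a uniform $(N+M,d)$-frame: tightness with Parseval constant $1$, and equality of all the vector norms. I will work with the analysis operators, since the background section characterizes Parseval frames by the condition $V^*V = I_d$.

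First, recall that $\{f_1,\dots,f_N\}$ being a uniform $(N,d)$-frame means its analysis operator $V$ satisfies $V^*V = I_d$, with each vector having norm $\sqrt{d/N}$; likewise the analysis operator $W$ of $\{g_1,\dots,g_M\}$ satisfies $W^*W = I_d$ with $\|g_j\| = \sqrt{d/M}$. Let $\tilde V \colon \cl H \to \ell_2(\{1,\dots,N+M\})$ be the analysis operator of the proposed family $\{af_1,\dots,af_N,bg_1,\dots,bg_M\}$, where $\cl H$ is the common $d$-dimensional space. Since the analysis operator simply records inner products against the frame vectors, $\tilde V$ is the block operator whose first $N$ coordinates are $aVh$ and whose last $M$ coordinates are $bWh$. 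Then $\tilde V^*\tilde V = a^2 V^*V + b^2 W^*W = (a^2+b^2)I_d$, so the combined family is Parseval exactly when $a^2+b^2 = 1$. Equivalently, one may run the reconstruction-formula computation $\sum_i \langle h, af_i\rangle af_i + \sum_j \langle h, bg_j\rangle bg_j = (a^2+b^2)h$. With the stated constants one checks immediately that $a^2+b^2 = N/(N+M) + M/(N+M) = 1$.

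Second, I would verify uniformity by computing $\|af_i\| = a\sqrt{d/N} = \sqrt{d/(N+M)}$ and $\|bg_j\| = b\sqrt{d/M} = \sqrt{d/(N+M)}$, so all $N+M$ vectors share the common norm $\sqrt{d/(N+M)}$, which is precisely the norm required of a uniform $(N+M,d)$-frame. Finally, since the rank of the Grammian $\tilde V\tilde V^*$ equals the rank of $\tilde V^*\tilde V = I_d$, the span of the combined family is still $d$-dimensional, so the dimension parameter is indeed unchanged.

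There is no genuine obstacle here; the point worth flagging is merely that the two required conditions together pin down $a$ and $b$ uniquely up to sign. Demanding $a\|f_i\| = b\|g_j\|$ for uniformity forces $a/b = \sqrt{N/M}$, and combining this with $a^2+b^2 = 1$ recovers exactly $a = \sqrt{N/(N+M)}$ and $b = \sqrt{M/(N+M)}$. Thus the proof amounts to the observation that these canonical constants are precisely the ones making both computations succeed at once.
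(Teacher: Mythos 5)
Your proof is correct and takes essentially the same route as the paper: the paper verifies uniformity by computing $\|af_i\| = \|bg_j\| = \sqrt{d/(N+M)}$ and then checks the Parseval condition by summing $\sum_i |\langle x, af_i\rangle|^2 + \sum_j |\langle x, bg_j\rangle|^2 = (a^2+b^2)\|x\|^2 = \|x\|^2$, which is exactly your operator identity $\tilde V^*\tilde V = a^2V^*V + b^2W^*W = I_d$ written in scalar form. The only cosmetic difference is your block-operator packaging and the closing remark that the constants $a,b$ are forced; neither changes the substance of the argument.
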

\begin{proof} Since $\|f_i\|= \sqrt{d/N}$ and $\|g_j\|= \sqrt{d/M}$ we have that
$\|a f_i\| = \|bg_j\| = \sqrt{d/(N+M)},$ so this set of vectors is uniform in norm.
Finally, for any vector $x \in \bb R^d,$ we have that
\[ \sum_{i=1}^N | \langle x, a f_i \rangle|^2 + \sum_{j=1}^M |\langle x, bg_j \rangle |^2 = a^2\|x\|^2 + b^2 \|x\|^2 = \|x\|^2,\]
so the Parseval condition is met.
\end{proof}

\begin{thm} \label{(7,3)} Let $\{ f_1, f_2, f_3, f_4 \}$ be the correlation minimizing $(4,3)$-frame of Theorem~\ref{(4,3)} and let $\{ e_1, e_2,e_3\}$ be the standard orthonormal basis for $\bb R^3$, then $\{ \sqrt{4/7}\, f_1, \sqrt{4/7}\, f_2, \sqrt{4/7}\, f_3, \sqrt{4/7}\, f_4, \sqrt{3/7}\, e_1, \sqrt{3/7}\, e_2, \sqrt{3/7}\, e_3 \}$ is a correlation minimizing $(7,3)$-frame.
\end{thm}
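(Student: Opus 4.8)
The plan is to verify the two defining properties of a correlation minimizing $(7,3)$-frame in turn: that the given set $\cl F$ is a uniform $(7,3)$-frame, and that its maximum correlation equals the constant $C(7,3)$. The first property is immediate from the preceding Proposition applied to the $(4,3)$-frame $\{f_1,f_2,f_3,f_4\}$ and the orthonormal basis $\{e_1,e_2,e_3\}$ (a $(3,3)$-frame), with $a=\sqrt{4/7}$ and $b=\sqrt{3/7}$; this produces a uniform $(7,3)$-frame with no further work.

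For the maximum correlation I would compute $M_\infty(\cl F)$ directly by sorting the pairs of vectors into three types. Since $M_\infty$ depends only on the normalized vectors, the scalars $a,b$ are irrelevant and I may use the unscaled vectors. Within the tetrahedron $\{f_i\}$ the Grammian is $I_4-\frac14 J_4$ by Theorem~\ref{(4,3)}, so each pair has normalized correlation $(1/4)/(3/4)=1/3$; within $\{e_j\}$ the correlation is $0$; and for a cross pair $f_i,e_j$ the correlation is $|\langle f_i,e_j\rangle|/\|f_i\|=(1/2)/(\sqrt3/2)=1/\sqrt3$. Hence $M_\infty(\cl F)=\max\{1/3,0,1/\sqrt3\}=1/\sqrt3$.

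The heart of the proof, and the main obstacle, is the matching lower bound $C(7,3)\ge 1/\sqrt3$. Since $7>\frac{d(d+1)}{2}=6$, the Welch bound is strictly below $1/\sqrt3$ (it equals $\sqrt2/3$) and is therefore useless here, so I would instead use the orthoplex bound. Represent any $7$ unit vectors $u_1,\dots,u_7$ in $\bb R^3$ by the traceless symmetric matrices $A_i=u_iu_i^*-\frac13 I_3$, which lie in the $5$-dimensional real space of traceless symmetric $3\times3$ matrices and satisfy $\langle A_i,A_j\rangle_{\mathrm{HS}}=\langle u_i,u_j\rangle^2-\frac13$. If every pairwise correlation were strictly less than $1/\sqrt3$, then every inner product $\langle A_i,A_j\rangle_{\mathrm{HS}}$ would be strictly negative, giving $7$ vectors with pairwise negative inner products in a $5$-dimensional space; but a space of dimension $D$ admits at most $D+1$ such vectors, and $7>5+1$. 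This contradiction shows some pair has correlation $\ge 1/\sqrt3$, so every configuration of $7$ lines, and in particular every $(7,3)$-frame, has $M_\infty\ge 1/\sqrt3$, whence $C(7,3)\ge1/\sqrt3$.

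Combining the two bounds yields $M_\infty(\cl F)=1/\sqrt3=C(7,3)$, so $\cl F$ is correlation minimizing. I would close by recording that $1/\sqrt3$ is exactly the packing value of the optimal $7$-line configuration (the three coordinate axes together with the four cube diagonals) computed in \cite{CHS96}, so that, in contrast to the $(5,3)$ case, this correlation minimizing frame is also Grassmannian. The only step requiring care beyond routine computation is the pairwise-negative-inner-product lemma underlying the orthoplex bound; this can be proved by induction on the dimension, projecting the remaining vectors off one chosen $A_i$ and checking that their inner products stay strictly negative.
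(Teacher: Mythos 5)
Your proposal is correct, and the first two steps (uniformity via the Proposition, and the computation $M_\infty(\cl F)=1/\sqrt{3}$) coincide with the paper's proof; the difference lies entirely in how the matching lower bound $C(7,3)\ge 1/\sqrt{3}$ is established. The paper does not prove this bound at all: it simply observes that $\arccos\bigl(\sqrt{3}/3\bigr)$ is the minimum angle of the rhombic dodecahedron configuration, which \cite{CHS96} identifies as the optimal packing of $7$ lines in $\bb R^3$, and then invokes the trivial fact that any uniform $(7,3)$-frame yields $7$ lines and hence cannot beat the optimal packing angle. Your argument replaces this citation with a self-contained proof via the orthoplex bound: embedding the normalized frame vectors $u_i$ as traceless symmetric matrices $A_i = u_iu_i^* - \tfrac13 I_3$ in a $5$-dimensional space, noting $\langle A_i,A_j\rangle_{\mathrm{HS}} = \langle u_i,u_j\rangle^2 - \tfrac13$, and applying the classical lemma that $\bb R^D$ contains at most $D+1$ vectors with pairwise strictly negative inner products (your induction sketch for this lemma is the standard and correct one, and the $A_i$ are visibly nonzero). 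Your route costs an extra page of linear algebra but buys independence from the line-packing literature: it proves, rather than quotes, that no $7$ unit vectors in $\bb R^3$ can have all pairwise correlations below $1/\sqrt{3}$, which is precisely the optimality statement for the $7$-line packing that the paper takes on faith from \cite{CHS96}. It also correctly explains why the Welch bound of Theorem 2.1, which gives only $\sqrt{2}/3$, is necessarily strict here since $7 > d(d+1)/2 = 6$, a point the paper leaves implicit.
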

\begin{proof} By the above proposition, this set of vectors is a uniform $(7,3)$-frame.  The inner products of pairs of these unequal vectors take on the values,
$\{ 0, \, \pm \frac{\sqrt{3}}{7}, \, \pm \frac{1}{7} \},$ so that for this frame,
$M_{\infty}(\cl F) = \frac{7}{3} \frac{\sqrt{3}}{7} = \frac{\sqrt{3}}{3}.$
 Since $cos^{-1}\left( \frac{\sqrt{3}}{3}\right)$  corresponds to the minimum angle for the Rhombic Dodecahedron \cite{RD}, which is an optimal line packing angle for 7 lines in 3 space found by \cite{CHS96}, this uniform Parseval frame must be correlation minimizing.
\end{proof}

Since the correlation minimizing (7,3)-frame corresponds to an optimal
line packing, every correlation minimizing (7,3)-frame would yield an
optimal line packing. But
we do not know if every correlation minimizing (7,3)-frame is
equivalent to this frame.
In \cite{CHS96}, they remark that the optimal packing of 7 lines in 3
space appears to be unique, but do not supply a proof. A related, and
possibly easier,
problem would be to decide if every optimal line packing of 7 lines in
3 space yields a tight frame.

The optimal line packing for $10$ lines in $\bb R^3$ is given numerically on Sloane's web site \cite{SL}.  In \cite{CHS96}, it was determined that there are infinitely many solutions to this optimal line packing problem.  This occurs because the axial line can rattle, that is, the vectors can move freely over a small range of angles without affecting the minimum angle.  

\begin{thm}\label{(10,3)}  The optimal line packing for $10$ lines in $\bb R^3$ comprised of 2 axis vectors and the set of 8 vectors that are not collinear from the scaled hexakis bi-antiprism, given by\\
	\begin{multline*}
	\left(1,0,0\right), \left(0,-1,0\right), \\
	\left(\pm \frac{ \sqrt{3}}{2},\frac{1}{2},0 \right),
	\left( \beta, 0 ,\pm \beta\sqrt{\sqrt{3}-1} \right)\\
	\left( \frac{\beta}{2}, \beta\frac{\sqrt{3}}{2},\beta \sqrt{\sqrt{3}-1}\right),
\left( -\frac{\beta}{2},-\beta\frac{\sqrt{3}}{2},\beta\sqrt{\sqrt{3}-1}\right),\\
 \left(\frac{\beta}{2},-\beta\frac{\sqrt{3}}{2},-\beta\sqrt{\sqrt{3}-1}\right),
 \left(-\frac{\beta}{2},\beta\frac{\sqrt{3}}{2},-\beta\sqrt{\sqrt{3}-1}\right).
 \end{multline*}
 where $\beta=3^{-\frac{1}{4}}$, is not a tight frame.  Moreover, there does not exist a rattle of the axis that will create it a tight frame.
  \end{thm}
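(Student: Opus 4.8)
The plan is to work directly with the frame operator $S=\sum_{i=1}^{10} v_i v_i^{\,T}$, since a set of ten unit vectors in $\bb R^3$ is a tight frame exactly when $S=cI_3$, and taking traces forces $c=10/3$. Thus the whole statement reduces to understanding the single matrix $S$ and, for the rattle, to controlling how $S$ changes as the two axial vectors are reoriented. The clean observation driving both parts is that the third diagonal entry $S_{33}$ is the obstruction.

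First I would dispose of the displayed configuration. Each of the ten listed vectors is a unit vector (a one-line check using $\beta^2=1/\sqrt3$), so $\operatorname{tr}S=10$ and tightness would force every diagonal entry of $S$ to equal $10/3$. But only the six off-equator bi-antiprism vectors carry a nonzero third coordinate, each contributing $z^2=\beta^2(\sqrt3-1)=(\sqrt3-1)/\sqrt3$; hence
\[ S_{33}=6\cdot\frac{\sqrt3-1}{\sqrt3}=6-2\sqrt3\approx 2.536\ \ne\ \tfrac{10}{3}. \]
Therefore $S\ne\tfrac{10}{3}I_3$ and the given frame is not tight.

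For the ``moreover'' clause, the point is that a rattle only reorients the two axial vectors, leaving the eight bi-antiprism vectors — and hence their contribution $6-2\sqrt3$ to $S_{33}$ — untouched. Writing $z_1,z_2$ for the third coordinates of the reoriented (still unit) axial vectors, tightness now demands
\[ z_1^2+z_2^2=\tfrac{10}{3}-(6-2\sqrt3)=2\sqrt3-\tfrac{8}{3}\approx 0.798. \]
I would then invoke the explicit rattle from \cite{CHS96,SL}: the admissible motion keeps the axial vectors in, or within a small angular neighborhood of, the equatorial $xy$-plane, so $z_1^2+z_2^2$ stays far below $0.798$ (in the purely azimuthal case it is identically $0$). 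Consequently $S_{33}$ remains pinned near $6-2\sqrt3$ throughout the rattle region and never reaches $10/3$, so no rattle can yield a tight frame.

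The main obstacle is precisely this last step: making ``rattle of the axis'' rigorous and proving the $z$-coordinate bound rather than asserting it. One must extract from the optimal-packing data the exact region through which the axial lines may move while preserving the optimal minimum angle, and verify that $z_1^2+z_2^2<2\sqrt3-\tfrac{8}{3}$ over that whole region — equivalently, that supplying the deficit would require tilts so large they violate the packing constraint and hence are not rattles at all. Everything else (the unit-norm check, the trace normalization, and the value $6-2\sqrt3$) is routine once $S_{33}$ is singled out as the single entry that cannot be repaired.
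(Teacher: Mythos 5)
Your first half is correct and is essentially the paper's own argument in different notation: non-tightness of the listed configuration comes down to one diagonal entry of the frame operator, $S_{33}=6\beta^2(\sqrt{3}-1)=6-2\sqrt{3}\neq\tfrac{10}{3}$, which in the paper's phrasing is the statement that the third column of the $10\times 3$ synthesis matrix has the wrong norm. The genuine gap is in the ``moreover'' clause, and you have named it yourself: your argument needs to know that every admissible rattle keeps $z_1^2+z_2^2$ strictly below $2\sqrt{3}-\tfrac{8}{3}$, and the only sources for the shape of the rattle region, \cite{CHS96} and \cite{SL}, give numerical descriptions of the packing, not a theorem you can invoke. Worse, no amount of care about $S_{33}$ alone can close this gap, because the diagonal conditions by themselves are consistent: writing the repositioned axial unit vectors as $v_i=(a_i,b_i,c_i)$, tightness requires $a_1^2+a_2^2=\tfrac{11}{6}-\sqrt{3}$, $b_1^2+b_2^2=\tfrac{17}{6}-\sqrt{3}$, $c_1^2+c_2^2=2\sqrt{3}-\tfrac{8}{3}$, and these three nonnegative increments sum to exactly $2$, so they can be met by unit vectors (for instance $v_2=(0,1,0)$ and $v_1$ with $a_1^2=\tfrac{11}{6}-\sqrt{3}$, $b_1^2=\tfrac{11}{6}-\sqrt{3}$, $c_1^2=2\sqrt{3}-\tfrac{8}{3}$). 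Thus there do exist placements of the two axial vectors giving $S_{33}=\tfrac{10}{3}$ exactly; ruling out tightness for those placements cannot be done by diagonal bookkeeping.

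The missing idea --- and the paper's route --- is to use the off-diagonal entries of $S$ as well. The eight fixed bi-antiprism vectors already have pairwise orthogonal coordinate columns, so tightness of the full ten-vector system forces $a_1b_1+a_2b_2=a_1c_1+a_2c_2=b_1c_1+b_2c_2=0$; that is, $(a_1,a_2)$, $(b_1,b_2)$, $(c_1,c_2)$ are three pairwise orthogonal vectors in $\bb R^2$, so at least one of them must be zero. This is incompatible with the norm conditions: the paper sets $a_1=a_2=0$ and derives $c_1^2+c_2^2=1$ against the required $c_1^2+c_2^2=3\sqrt{3}-\tfrac{9}{2}$, and alternatively one can simply note that all three increments above are strictly positive, so none of the three pairs can vanish. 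Either way, this disposes of \emph{every} placement of the two axial unit vectors, rattle or not, with no appeal whatsoever to the geometry of the rattle region --- which is why the paper's proof is self-contained, while yours, as you acknowledge, cannot be completed from the cited data.
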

\begin{proof} 
	
First, we construct the hexakis bi-antiprim 
by taking two hexagonal antiprisms and joining them at the base.  To create the first half, shift the coordinates for the hexigonal antiprism from \cite{DC}.  For the second half, we use a shift and rotation of the same coordinates from \cite{DC}.  Now, we join them at the base to complete the construction.  From the set of 18 unique scaled vectors in the construction we consider 2 axis together with the set of 8 vectors that are not collinear. Set $\beta=3^{-\frac{1}{4}}$ and define \\
$V= \left(\begin{array}{ccc}

1 & 0& 0\\
 0 &-1 & 0\\
-\frac{ \sqrt{3}}{2}&\frac{1}{2} & 0 \\
\frac{ \sqrt{3}}{2}&\frac{1}{2} & 0 \\
 \beta& 0 & \beta\sqrt{\sqrt{3}-1} \\
 \beta& 0 &-\beta\sqrt{\sqrt{3}-1} \\
\frac{\beta}{2}& \beta\frac{\sqrt{3}}{2}&\beta \sqrt{\sqrt{3}-1}\\
-\frac{\beta}{2}&-\beta\frac{\sqrt{3}}{2}&\beta\sqrt{\sqrt{3}-1}\\
\frac{\beta}{2}&-\beta\frac{\sqrt{3}}{2}&-\beta\sqrt{\sqrt{3}-1}\\
-\frac{\beta}{2}&\beta\frac{\sqrt{3}}{2}&-\beta\sqrt{\sqrt{3}-1}\\
\end{array}\right)$

Recall that a set of vectors forms a uniform tight frame if and only
if they are of equal norm and when they are entered as the rows of a matrix, then that matrix is
a multiple of an isometry. Moreover, to be a multiple of an isometry,
the columns of the matrix must be orthogonal and of equal norm.

The rows of $V$ are unit norm. By inspection we see the columns are
orthogonal. However, the columns of $V$ do not have equal norm.
Hence, no multiple of $V$ is an isometry and so the rows are not a
tight frame.  

Now, we will consider the case where the axial lines "rattle" to try to gain equality in the norm of the columns. 
  Consider the first row as $v_1=\left(a_1,b_1,c_1\right)$ and the
  second as $v_2=\left(a_2,b_2,c_2\right)$.  Since the vectors
  comprising the last eight entries of $V$ are orthogonal, to keep the
  columns orthogonal we will need the vectors $(a_1,a_2), (b_1, b_2)$
  and $(c_1,c_2)$ to be orthogonal. Since this is three vectors in $\bb R^2$, one of them must be zero.  The norm of the first column is the largest so $a_1=a_2=0$. The rows must be unit norm so $b_1^2+c_1^2=1$ and $b_2^2+c_2^2=1$.
We still need the norms of the three columns to be equal.  Thus, we get the system of equations. 
\begin{equation*}
\left\lbrace\begin{matrix}
b_1^2+c_1^2=1 \\b_2^2+c_2^2=1\\b_1^2+b_2^2=1\\c_1^2+c_2^2=3\sqrt{3}-\frac{9}{2}\\
\end{matrix}. \right.
\end{equation*}
By subtracting the third equation from the first we see that
$c_1^2=b_2^2$.  Plugging into equation 2 we get $c_1^2+c_2^2=1$, which
contradicts the fourth equation. Therefore, there is no choice of
vectors that can make V a multiple of an isometry. 
\end{proof}

\begin{thm} A correlation minimizing $\left(12,3\right)$-frame is
  given by scaling the set vertices of the rhombicuboctahedron through
  the origin to be vectors of length $\frac{1}{2}$. This frame is
  given by all non-collinear permutations of the vectors $\frac{1}{2} \left(\pm \frac{1}{\sqrt{2\sqrt{2}+5}} ,\pm \frac{1}{\sqrt{2\sqrt{2}+5}}, \pm \frac{(1+\sqrt{2})}{\sqrt{2\sqrt{2}+5}} \right).$
\end{thm}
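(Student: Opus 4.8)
The plan is to follow the template established for the $(6,3)$- and $(7,3)$-frames: first verify that the twelve scaled vertices form a uniform $(12,3)$-frame, and then identify the minimum angle of the associated line configuration with the optimal packing angle recorded in \cite{CHS96}. First I would check uniformity. Writing $s = \sqrt{2\sqrt 2 + 5}$ and $t = 1+\sqrt 2$, each listed vector is $\tfrac12$ times a unit vector of the form $\tfrac1s(\pm 1, \pm 1, \pm t)$ (in some coordinate order), and since $1 + 1 + t^2 = 5 + 2\sqrt 2 = s^2$ these really are unit vectors; hence every frame vector has norm $\tfrac12 = \sqrt{d/N}$, as required of a uniform $(12,3)$-frame.

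Next I would establish the Parseval/tightness condition $V^*V = I_3$, where $V$ is the $12\times 3$ matrix whose rows are the frame vectors. Rather than compute the three column inner products by hand (as in Theorem~\ref{(10,3)}), I would argue by symmetry: because $f_if_i^* = (-f_i)(-f_i)^*$, the frame operator $V^*V = \sum_{i=1}^{12} f_if_i^*$ depends only on the twelve lines, not on which endpoint of each antipodal pair was chosen. These twelve lines are the full set of diagonals of the rhombicuboctahedron, a configuration invariant under the octahedral rotation group (acting as $S_4$). Since this group acts irreducibly on $\bb R^3$, Schur's lemma forces $V^*V = \lambda I_3$, and taking traces gives $3\lambda = \sum_i \|f_i\|^2 = 12 \cdot \tfrac14 = 3$, so $\lambda = 1$. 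Thus $\cl F$ is a uniform $(12,3)$-frame. (Equivalently one may simply verify that the three columns of $V$ are orthonormal; the sign-independence above guarantees this is unaffected by the choice of representatives.)

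Finally I would show $\cl F$ is correlation minimizing. Since each $f_i = \tfrac12\hat f_i$ with $\hat f_i$ a unit vector, the identity $M_\infty(\cl F) = \tfrac{N}{d}\max_{k\neq l}|\langle f_k,f_l\rangle|$ collapses to $M_\infty(\cl F) = \max_{k\neq l}|\langle \hat f_k,\hat f_l\rangle|$, i.e. the cosine of the minimum angle between the twelve lines. Enumerating the inner products $\tfrac1{s^2}\langle(\pm1,\pm1,\pm t),\,\sigma(\pm1,\pm1,\pm t)\rangle$ over coordinate permutations $\sigma$ and sign choices, the attainable nonzero absolute values are $\tfrac{1}{s^2}$, $\tfrac{t^2-2}{s^2}$, and $\tfrac{t^2}{s^2}$, the largest being $\tfrac{t^2}{s^2} = \tfrac{3+2\sqrt2}{5+2\sqrt 2} = \tfrac{7+4\sqrt2}{17}$. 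Hence $M_\infty(\cl F) = \tfrac{7+4\sqrt2}{17}$ and the minimum angle is $\arccos\!\big(\tfrac{7+4\sqrt2}{17}\big)$. I would then invoke \cite{CHS96}: these twelve lines realize the optimal packing of $12$ lines in $\bb R^3$, so this angle equals the optimal packing angle. Because the optimal packing angle is an upper bound for $\Theta(\cl G)$ over every $\cl G \in \cl E_1(12,3)$, one gets $C(12,3) \geq \cos\!\big(\arccos\tfrac{7+4\sqrt2}{17}\big) = M_\infty(\cl F)$; combined with the trivial bound $C(12,3) \leq M_\infty(\cl F)$, equality holds and $\cl F$ is correlation minimizing.

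The main obstacle is this last step. The correlation computation is routine casework, but the argument is only as rigorous as the claim that these twelve lines form a genuinely optimal (not merely best-known) packing: the sandwich inequality rests on $\Theta(12,3) \leq \Theta_{\mathrm{pack}}(12,3)$, which in turn requires a proof that no $12$ lines in $\bb R^3$ separate by more than $\arccos\!\big(\tfrac{7+4\sqrt2}{17}\big)$. I would therefore need to confirm that \cite{CHS96} establish optimality of the $12$-line packing rather than a numerical approximation, and to match the exact value $\arccos\!\big(\tfrac{7+4\sqrt2}{17}\big)$ with theirs; absent a proof of optimality, the conclusion would only be conditional on the packing being optimal for $N=12$. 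Note that both ingredients are essential and genuinely distinct here: unlike the $(5,3)$ and $(10,3)$ cases, where the optimal packing fails to be tight, the present configuration must be shown both to be tight and to attain the optimal packing angle.
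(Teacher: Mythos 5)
Your proposal is correct, and its overall skeleton is the same as the paper's: verify that the twelve scaled vertices form a uniform tight frame, then conclude correlation minimization because the vectors lie on the optimal $12$-line packing of \cite{CHS96}. Where you genuinely diverge is in the tightness step. The paper writes down the explicit $12\times 3$ matrix $V$ and checks by inspection that its rows have norm $\frac12$ and its columns are orthogonal and of equal norm, so that $V$ is an isometry; you instead observe that the frame operator $\sum_i f_i f_i^*$ depends only on the twelve lines, that these lines are invariant under the octahedral rotation group acting irreducibly on $\bb R^3$, and invoke Schur's lemma plus a trace count to force $V^*V = I_3$. Both are valid; the paper's check is elementary and self-contained, while your symmetry argument avoids all sign and coordinate bookkeeping and generalizes immediately to any line system invariant under an irreducibly acting group. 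You are also more careful than the paper in the final step: you compute the exact maximum correlation $\frac{7+4\sqrt2}{17}$ and spell out the sandwich inequality $C(12,3) \ge \cos\Theta_{\mathrm{pack}}(12,3) = M_\infty(\cl F) \ge C(12,3)$, whereas the paper simply asserts the conclusion once tightness is established. Your closing caveat --- that the whole argument is conditional on \cite{CHS96} actually \emph{proving} optimality of the $12$-line packing rather than reporting a best-known numerical configuration --- is well taken, but note that the paper's own proof rests on exactly the same citation in exactly the same way (and the paper's later remark concedes that even uniqueness of that packing is unknown), so relative to the paper this is a shared assumption rather than a gap in your argument.
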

\begin{proof}
	In \cite{CHS96}, the optimal line packing of 12 lines in $\bb
        R^3$ is a rhombicuboctahedron.  Define V such that the rows
        are the vectors of the rhombicuboctahedron in \cite{RD} . So,
        $V= \frac{1}{2} \left(
	\begin{array}{ccc}
	\frac{1}{\sqrt{2\sqrt{2}+5}} & \frac{1}{\sqrt{2\sqrt{2}+5}} & \frac{(1+\sqrt{2})}{\sqrt{2\sqrt{2}+5}} \\ 
	\frac{1}{\sqrt{2\sqrt{2}+5}} & \frac{1}{\sqrt{2\sqrt{2}+5}} & \frac{-1-\sqrt{2}}{\sqrt{2\sqrt{2}+5}} \\ 
	\frac{1}{\sqrt{2\sqrt{2}+5}} & -\frac{1}{\sqrt{2\sqrt{2}+5}} & \frac{(1+\sqrt{2})}{\sqrt{2\sqrt{2}+5}} \\ 
	\frac{1}{\sqrt{2\sqrt{2}+5}} & -\frac{1}{\sqrt{2\sqrt{2}+5}} & -\frac{(1+
		\sqrt{2})}{\sqrt{2\sqrt{2}+5}} \\ 
	\frac{1}{\sqrt{2\sqrt{2}+5}} & \frac{(1+\sqrt{2})}{\sqrt{2\sqrt{2}+5}} & 
	\frac{1}{\sqrt{2\sqrt{2}+5}} \\ 
	\frac{1}{\sqrt{2\sqrt{2}+5}} & \frac{(1+\sqrt{2})}{\sqrt{2\sqrt{2}+5}} & -
	\frac{1}{\sqrt{2\sqrt{2}+5}} \\ 
	\frac{1}{\sqrt{2\sqrt{2}+5}} & \frac{-1-\sqrt[2]{2}}{\sqrt{2\sqrt{2}+5}} & 
	\frac{1}{\sqrt{2\sqrt{2}+5}} \\ 
	\frac{1}{\sqrt{2\sqrt{2}+5}} & -\frac{(1+\sqrt{2})}{\sqrt{2\sqrt{2}+5}} & 
	-\frac{1}{\sqrt{2\sqrt{2}+5}} \\ 
	\frac{(1+\sqrt{2})}{\sqrt{2\sqrt{2}+5}} & \frac{1}{\sqrt{2\sqrt{2}+5}} & 
	\frac{1}{\sqrt{2\sqrt{2}+5}} \\ 
	\frac{(1+\sqrt{2})}{\sqrt{2\sqrt{2}+5}} & \frac{1}{\sqrt{2\sqrt{2}+5}} & -
	\frac{1}{\sqrt{2\sqrt{2}+5}} \\ 
	\frac{(1+\sqrt{2})}{\sqrt{2\sqrt{2}+5}} & -\frac{1}{\sqrt{2\sqrt{2}+5}} & 
	\frac{1}{\sqrt{2\sqrt{2}+5}} \\ 
	\frac{(1+\sqrt{2})}{\sqrt{2\sqrt{2}+5}} & -\frac{1}{\sqrt{2\sqrt{2}+5}} & 
	-\frac{1}{\sqrt{2\sqrt{2}+5}}
	\end{array}\right).$ Each row $V$ is of norm $\frac{1}{2}$.  Additionally,
      we see the columns are orthogonal and of equal norm.  Therefore,
      V is an isometry and we can conclude that the rows form a
      uniform Parseval frame that is a correlation minimizing frame.
\end{proof}
\begin{thm}
	A correlation minimizing $\left(16,3\right)$-frame is given by
scaling the set of unit norm opposite vertices of the Biscribed
Pentakis Dodecahedron with radius one centered at the origin. This
frame is given by scaling the vectors\\
	 \begin{multline*}
	 \left(0,c_0,\pm c_4\right), \left(c_4,0,\pm c_0\right),\left(c_0,\pm c_4,0\right), 
	\left(c_1,0,\pm c_3\right), \left(c_3,\pm c_1,0\right), \left(0,c_3,\pm c_1\right),\\ \left(0,-c_3,-c_1\right) 
\left(c_2,c_2,c_2\right), \left(c_2,-c_2,-c_2\right), \left(-c_2,c_2,-c_2\right),
\left(-c_2,-c_2,c_2\right).
\end{multline*}
 $c_0= \frac{\sqrt{15}-\sqrt{3}}{6}$, $c_1=
 \frac{\sqrt{10\left(5-\sqrt{5}\right)}}{10}$, $c_2=
 \frac{\sqrt{3}}{3}$,$c_3=
 \frac{\sqrt{10\left(5+\sqrt{5}\right)}}{10}$, and $c_4=
 \frac{\sqrt{15}+\sqrt{3}}{6}$ by the factor $\frac{\sqrt{3}}{4}.$
\end{thm}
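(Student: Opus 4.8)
The plan is to follow the template established in the proof of Theorem~\ref{(7,3)} and the preceding $(12,3)$-frame theorem: exhibit the analysis matrix $V$ whose rows are the sixteen listed vectors scaled by $\frac{\sqrt3}{4}$, show that $V$ is a scalar multiple of an isometry (so that its rows form a uniform Parseval frame), and then identify the resulting minimal angle with the optimal $16$-line packing angle recorded in \cite{CHS96} to conclude that the frame is correlation minimizing.

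First I would verify that the listed vectors are genuinely unit vectors, which reduces to the three radical identities $c_0^2+c_4^2=1$, $c_1^2+c_3^2=1$, and $3c_2^2=1$. A short computation gives $c_0^2=\frac{3-\sqrt5}{6}$, $c_4^2=\frac{3+\sqrt5}{6}$, $c_1^2=\frac{5-\sqrt5}{10}$, $c_3^2=\frac{5+\sqrt5}{10}$, and $c_2^2=\frac13$, so each listed vector has norm $1$ and each scaled row has norm $\frac{\sqrt3}{4}=\sqrt{3/16}=\sqrt{d/N}$, the correct uniform norm for an element of $\cl E_1(16,3)$.

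For tightness I would invoke the criterion recalled in the $(10,3)$- and $(12,3)$-frame proofs: the rows form a uniform tight frame precisely when the three columns of $V$ are mutually orthogonal and of equal norm. Since $Tr(V^*V)=\sum_i\|f_i\|^2=16$ for the unit vectors, equal column norms force each column to have squared norm $16/3$, so that $V^*V=\frac{16}{3}I_3$ on the unit vectors and $V^*V=I_3$ after scaling by $\frac{\sqrt3}{4}$. A cleaner route, which I would at least mention, is that the vertex set of the biscribed pentakis dodecahedron is invariant under the full icosahedral group, which acts irreducibly on $\bb R^3$; by Schur's lemma $\sum_i f_if_i^*$ must then be a scalar multiple of $I_3$, giving tightness with no coordinate computation at all. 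Once tightness is in hand, the maximum correlation is simply $M_\infty(\cl F)=\max_{k\ne l}|\langle f_k,f_l\rangle|$ evaluated on the unit vectors, and I would compute this maximum while using the icosahedral symmetry to cut down the number of distinct inner products that must be inspected.

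The correlation-minimizing claim then follows from the same comparison used for the $(7,3)$- and $(12,3)$-frames: the lines spanned by any uniform Parseval frame form a packing of $16$ lines in $\bb R^3$, so $M_\infty(\cl F)\ge C(16,3)\ge \cos\Theta_{\mathrm{pack}}$, where $\Theta_{\mathrm{pack}}$ is the optimal $16$-line angle of \cite{CHS96}; exhibiting our tight frame with $M_\infty(\cl F)=\cos\Theta_{\mathrm{pack}}$ forces equality throughout and hence correlation minimality. The main obstacle will be bookkeeping rather than ideas. First, the listing as written appears to contain one stray vector, so I would reconcile it with the $32$ vertices of the solid (equivalently the $16$ antipodal pairs, splitting into a $6$-pair and a $10$-pair icosahedral orbit) before forming $V$. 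Second, and where essentially all the labor lies, computing the maximal inner product in closed form and checking that its arccosine is exactly the tabulated \cite{CHS96} angle for $N=16$ requires careful manipulation of the nested radicals $\sqrt{10(5\pm\sqrt5)}$.
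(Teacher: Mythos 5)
Your proposal is correct and takes essentially the same route as the paper: exhibit the coordinate matrix of the sixteen unit vectors, check that the vectors are unit norm and that the three coordinate rows (your columns) are orthogonal and of equal norm so that $\frac{\sqrt{3}}{4}W$ is an isometry whose rows form a uniform $(16,3)$-frame, and then invoke the optimality of the $16$-line packing from \cite{CHS96} to conclude the frame is correlation minimizing. Your side observation about a stray seventeenth vector is also well taken: the theorem's list double-counts one line, and in fact the paper's own matrix contains the antipodal pair $(0,c_3,c_1)$, $(0,-c_3,-c_1)$ as a sign typo (which would make two coordinate rows non-orthogonal); the correct pair is $(0,c_3,\pm c_1)$, exactly as you reconstruct.
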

\begin{proof}
	Let the columns of $W^*$ be the opposite vertices of the Biscribed Pentakis Dodecahedron with radius one centered at the origin in \cite{DC}.  Set $c_0= \frac{\sqrt{15}-\sqrt{3}}{6}$, $c_1= \frac{\sqrt{10\left(5-\sqrt{5}\right)}}{10}$, $c_2= \frac{\sqrt{3}}{3}$,$c_3= \frac{\sqrt{10\left(5+\sqrt{5}\right)}}{10}$, and 
	$c_4= \frac{\sqrt{15}+\sqrt{3}}{6}$.  It follows that, \\
	$W^*=
	\left(\begin{array}{cccccccccccccccc}
	0   & 0    & c_4 & c_4 & c_0 & c_0  & c_1 & c_1  & c_3 & c_3  & 0   & 0    & c_2 & c_2  & -c_2 & -c_2 \\
	c_0 & c_0  & 0   &  0  & c_4 & -c_4 & 0   & 0    & c_1 & -c_1 & c_3 & -c_3 & c_2 & -c_2 & c_2  & -c_2\\
	c_4 & -c_4 & c_0 & -c_0&  0  & 0    & c_3 & -c_3 & 0   & 0    & c_1 & -c_1 & c_2 & -c_2 & -c_2 & c_2
	
	\end{array}
	\right)$.\\
	By inspection we see that the columns of $W^*$ are unit norm,
        the rows are equal norm
        and the rows are orthogonal. Hence, $V= \frac{\sqrt{3}}{4}W$
        is an isometry and so its rows are a uniform $(16,3)$-frame. Since these vectors are the vertices of the the optimal line packing in \cite{CHS96} they form a $(16,3)$-frame, which is correlation minimizing.
\end{proof}  

\begin{remark} We do not know if the correlation minimizing
  $(12,3)$-frame and $(16,3)$-frame are unique up to frame
  equivalence. This is largely because it is unknown if the
  corresponding arrangements for the optimal packings of $16$ lines
  and $12$ lines, respectively, in $\bb R^3$ are unique.
\end{remark}

\end{document}